 \newtheorem{teo}{Theorem}
 \newtheorem{lema}{Lemma}
 \newtheorem{nota}{Remark}
 \newtheorem{prop}{Proposition}
 \newtheorem{ex}{Example}
\DeclareMathOperator{\sign}{sign}
\begin{document}

\title[Limit cycles for a class of quintic $\mathbb{Z}_6-$equivariant equations]
{Limit cycles for a class of quintic $\mathbb{Z}_6-$equivariant systems without infinite critical points}
\author{M.J. \'Alvarez}
\address{M.J. \'Alvarez, Departament de Matem{\`a}tiques i Inform{\`a}tica, Universitat de les Illes Ba\-lears, 07122, Palma de Mallorca, Spain}
\email{chus.alvarez@uib.es}
\author{I.S. Labouriau}
\address{I.S. Labouriau, Centro de Matem\'atica da Universidade do Porto.\\ Rua do Campo Alegre 687, 4169-007 Porto, Portugal}
\email{islabour@fc.up.pt}
\author{A.C. Murza}
\address{A.C. Murza, Centro de Matem\'atica da Universidade do Porto.\\ Rua do Campo Alegre 687, 4169-007 Porto, Portugal}
\email{adrian.murza@fc.up.pt}

\thanks{M.J.A. was partially supported by grant MTM2008-03437. I.S.L. and A.C.M. were partially supported by
the European Regional Development Fund through the programme COMPETE and  through the Fun\-da\-\c c\~ao para
a Ci\^encia e a Tecnologia (FCT) under
the project PEst-C/MAT/UI0144/2013.
A.C.M. was also supported by the grant SFRH/ BD/ 64374/ 2009 of FCT
}

\begin{abstract}
We analyze the dynamics of a class of $\mathbb{Z}_6-$equivariant systems of the form $\dot{z}=pz^2\bar{z}+sz^3\bar{z}^2-\bar{z}^{5},$ where $z$ is
complex, the time $t$ is real, while $p$ and $s$
 are complex parameters. This study is the natural continuation of a previous work
(M.J. \'Alvarez, A. Gasull, R. Prohens,  Proc. Am. Math. Soc. \textbf{136}, (2008), 1035--1043)
on the normal form of $\mathbb{Z}_4-$equivariant systems. Our study uses the reduction of the equation to an Abel one,
and provide criteria for proving in some cases uniqueness and hyperbolicity of the limit cycle surrounding either 1, 7 or 13 critical points,
the origin being always one of these points.
\end{abstract}

\maketitle

\textbf{Keywords:} 

{Planar autonomous ordinary differential equations, symmetric polinomial systems, limit cycles}\\
\bigbreak
\textbf{AMS Subject Classifications:}

{Primary: 34C07, 34C14; Secondary: 34C23, 37C27}

\section{Introduction and main results}\label{Introduction and main results}
Hilbert $XVI^{th}$ problem represents one of the open question in mathematics and it has produced an impressive amount of
publications throughout the last century. The study of this problem in the context of equivariant dynamical systems is a relatively
new branch of analysis and is based on the development within the last twenty years of the theory of Golubitsky, Stewart and Schaeffer
 in \cite{GS85,GS88}. Other authors \cite{Che} have  specifically
  considered this theory when studying the limit cycles and
related phenomena in systems with symmetry. Roughly speaking the presence of symmetry may complicate the bifurcation analysis because
it often forces eigenvalues of high multiplicity.
 This is not the case of planar systems; on
the contrary, it simplifies the
analysis because of the reduction to  isotypic components.
More precisely  
it allows  us to reduce the bifurcation analysis  to a
region of the complex plane.

In this paper we analyze the $\mathbb{Z}_6-$equivariant system
\begin{equation}\label{main equation}
\dot{z}=\displaystyle{\frac{dz}{dt}}=(p_1+ip_2)z^2\bar{z}+(s_1+is_2)z^3\bar{z}^2-\bar{z}^{5}
 =f(z)  ,
\end{equation}
where $p_1,p_2,s_1,s_2\in\mathbb{R}.$

The general form of the $\mathbb{Z}_q-$equivariant equation is
\[
 \dot{z}=zA(|z|^2)+B\bar{z}^{q-1}+O(|z|^{q+1}),
\]
where $A$ is a polynomial of degree $[(q-1)/2].$ The study of this class of
 equations is developed in several books, see \cite{arn, Che}, when the resonances are
strong, {\it i.e.} $q<4$ or weak $q>4.$ The special case $q=4$ is also treated in several other articles,
see \cite{Rafel1, Che, Zegeling}. In these mentioned
works it is said that the weak resonances are easier to study than the other cases, as the equivariant term $\bar{z}^ {q-1}$ is not dominant with respect to the
function  on $\bar{z}^ 2.$ This is true if the  interest lies in obtaining 
a bifurcation diagram   near 
the origin, but it is no  longer 
true if the  analysis is global and involves the study of limit cycles.
 This is the goal of the present work: studying the global phase portrait of system \eqref{main equation} paying special attention to the existence, location
and uniqueness of limit cycles surrounding 1, 7 or 13 critical points. As far as we know this is the first work in which the existence of limit cycles is studied for
this kind of systems.

The main result of our paper is the following.
\begin{teo}\label{teorema principal}
 Consider equation \eqref{main equation} with $p_2\neq 0$, $|s_2|>1$ 
 and define the quantities:
 \begin{equation*}
\Sigma_A^-=\frac{p_2s_1s_2-\sqrt{p_2^2(s_1^2+s_2^2-1)}}{s_2^2-1},
\qquad
\Sigma_A^+=\frac{p_2s_1s_2+ \sqrt{p_2^2(s_1^2+s_2^2-1)}}{s_2^2-1}.
\end{equation*}
Then, the following statements are true:
\begin{itemize}
\item[(a)] If one of the conditions
\begin{equation*}
(i)\quad p_1\notin\left(\Sigma_A^-,\Sigma_A^+\right),
\qquad
(ii)\quad
p_1\notin\left(\frac{\Sigma_A^-}{2},\frac{\Sigma_A^+}{2}\right)
\end{equation*}
is satisfied, then equation \eqref{main equation} has at most one limit cycle surrounding the origin. 
Furthermore, when the limit cycle exists it is hyperbolic.
\item[(b)] 
There are equations \eqref{main equation} under condition  $(ii)$ having exactly one limit cycle surrounding either $1,~7$ or $13$
critical points, and equations \eqref{main equation} under condition $(i)$ having exactly one hyperbolic limit cycle surrounding either $7$ critical points if $p_1\ne\Sigma_A^\pm$, or the only critical point if $p_1=\Sigma_A^\pm$.
\end{itemize}
\end{teo}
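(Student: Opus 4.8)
The plan is to pass to polar coordinates $z=re^{i\theta}$, reduce the orbit equation to an Abel equation, quote the known uniqueness/hyperbolicity theory for Abel equations to get part (a), and handle part (b) by a Poincar\'e--Bendixson argument together with an explicit count of the finite critical points. Separating real and imaginary parts in \eqref{main equation} gives $\dot r=r^{3}(p_1+a(\theta)r^{2})$ and $\dot\theta=r^{2}(p_2+b(\theta)r^{2})$, where $a(\theta)=s_1-\cos6\theta$ and $b(\theta)=s_2+\sin6\theta$. Since $|s_2|>1$, the coefficient $b$ never vanishes, so $\dot\theta$ keeps the sign of $p_2$ on a neighbourhood of the origin, and in the Poincar\'e compactification infinity carries no critical points (this is what the title refers to). Hence every limit cycle surrounding the origin lies in the connected component of $\{\dot\theta\neq0\}$ containing the origin, where orbits are graphs $r=r(\theta)$ of $\frac{dr}{d\theta}=\frac{r(p_1+a(\theta)r^{2})}{p_2+b(\theta)r^{2}}$, and limit cycles are exactly the isolated $2\pi$-periodic solutions with $r>0$.

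Next I would set $\rho=\dfrac{r^{2}}{p_2+b(\theta)r^{2}}$. On the region above, for each fixed $\theta$ this is a diffeomorphism in the radial variable taking the origin to $\rho=0$, and a direct computation turns the orbit equation into the Abel equation $\rho'=A(\theta)\rho^{3}+B(\theta)\rho^{2}+C(\theta)\rho$, with $C\equiv 2p_1/p_2$, $A(\theta)=\tfrac{2}{p_2}b(\theta)\bigl(p_1b(\theta)-p_2a(\theta)\bigr)$, and $B$ an explicit trigonometric polynomial. Limit cycles of \eqref{main equation} around the origin correspond to non-trivial periodic solutions of this equation of the appropriate sign, and hyperbolicity is preserved because the change of variables and its inverse are bounded along the compact cycle.

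For part (a) I would invoke the classical result that an Abel equation $\rho'=A\rho^{3}+B\rho^{2}+C\rho$ whose leading coefficient $A$ has constant sign (and is not identically zero, which here is automatic since $p_2\neq0$) has at most one periodic solution in $\{\rho>0\}$ and at most one in $\{\rho<0\}$, each hyperbolic; this is precisely the circle of ideas of the cited work of \'Alvarez, Gasull and Prohens, which also provides a second sufficient criterion of the same type. Since $b$ has constant sign, $A$ has constant sign iff $p_1b(\theta)-p_2a(\theta)=\bigl(\tfrac{p_1s_2}{p_2}-s_1\bigr)p_2+p_1\sin6\theta+p_2\cos6\theta$ does; using that $K+M\sin6\theta+N\cos6\theta$ keeps a constant sign exactly when $K^{2}\ge M^{2}+N^{2}$, this reduces to $p_1^{2}(s_2^{2}-1)-2p_1p_2s_1s_2+p_2^{2}(s_1^{2}-1)\ge0$, i.e. condition $(i)$, $p_1\notin(\Sigma_A^-,\Sigma_A^+)$. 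Running the second criterion through the identical trigonometric estimate yields the same inequality with $p_1$ replaced by $2p_1$, i.e. condition $(ii)$. Under either hypothesis the Abel equation has at most one non-trivial periodic solution of the relevant sign, hyperbolic when it exists; transporting this back through $\rho$ proves (a).

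For part (b), the finite critical points $z\neq0$ of \eqref{main equation} satisfy $p_1+a(\theta)r^{2}=0=p_2+b(\theta)r^{2}$; eliminating the angle gives $(s_1^{2}+s_2^{2}-1)R^{2}+2(p_1s_1+p_2s_2)R+(p_1^{2}+p_2^{2})=0$ in $R=r^{2}$, a quadratic with positive leading coefficient (here $|s_2|>1$ is used). Each positive root produces a circle of exactly six critical points, so the total is $1$, $7$ or $13$, and the discriminant of this quadratic vanishes exactly at $p_1=\Sigma_A^\pm$, which separates the borderline configurations. To produce the limit cycle for suitable parameters I would exhibit explicit values for which the origin and infinity have the stability needed to enclose an invariant annulus (again using $b\not\equiv0$, so infinity behaves like a focus/node), conclude by Poincar\'e--Bendixson, and then read off uniqueness and hyperbolicity from (a); finally one verifies, for those parameters, whether the cycle encloses the critical circles. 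The routine parts are the reduction and (a); the real work is (b): locating the cycle precisely enough to say which critical points it surrounds, treating the degenerate cases $p_1=\Sigma_A^\pm$ where $A$ acquires zeros and the Abel criterion is at its limit, and checking that the critical circles, when present, do not obstruct the existence of a surrounding cycle — this requires the local nature (focus/node versus saddle) of the points on those circles.
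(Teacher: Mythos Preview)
Your approach is essentially the paper's: polar coordinates, a Cherkas substitution to an Abel equation, then the Pliss/Gasull--Llibre criterion. Your factorization $A(\theta)=\tfrac{2}{p_2}\,b(\theta)\bigl(p_1b(\theta)-p_2a(\theta)\bigr)$ is correct (it equals the paper's formula once one writes $p_1-p_1\cos^{2}6\theta=p_1\sin^{2}6\theta$), and your derivation of conditions $(i)$ and $(ii)$ via the elementary bound on $K+M\sin6\theta+N\cos6\theta$ is right on the mark.

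The genuine gap is the Abel step. The theorem you quote --- ``at most one periodic solution in $\{\rho>0\}$ and at most one in $\{\rho<0\}$'' when $A$ has constant sign --- is \emph{false} as stated: already the autonomous equation $\rho'=\rho(\rho-1)(\rho-2)$ has $A\equiv1>0$ and two periodic solutions in $\{\rho>0\}$. The correct statement (Pliss; Gasull--Llibre) is that if $A$ (or $B$) does not change sign then the Abel equation has at most \emph{three} $2\pi$-periodic solutions, counted with multiplicity. To squeeze ``at most one limit cycle'' out of this, the paper uses a step you are missing: the curve $\rho=1/b(\theta)$ is always a $2\pi$-periodic solution of your Abel equation (substitute directly, or notice it is the image of the invariant circle at infinity under your Cherkas change, since $r^{2}\to\infty$ gives $\rho\to 1/b(\theta)$). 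Together with $\rho\equiv0$ this accounts for two of the three admissible periodic solutions, leaving room for at most one more; hyperbolicity of that remaining one is then automatic from the multiplicity count. Without identifying this second ``trivial'' solution, part (a) does not close.

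For part (b) your quadratic in $R=r^{2}$ is correct and its discriminant is exactly the paper's $\mathcal Q(p_1,p_2)$, so the $1/7/13$ trichotomy and the identification of the borderline with $p_1=\Sigma_A^{\pm}$ are right (you should also note that the product of the roots is positive, so both roots are positive precisely when $p_1s_1+p_2s_2<0$; otherwise there are no extra equilibria even if the discriminant is nonnegative). What the paper actually does beyond your sketch is: classify the twelve extra equilibria as six saddles and six nodes (index count in the compactified plane), and, at $p_1=\Sigma_A^{+}$, build an explicit \emph{polygonal curve without contact} from the origin to a saddle-node along which the field points outwards. Poincar\'e--Bendixson then traps a limit cycle outside the set $\{\dot\theta=0\}$ surrounding all seven equilibria; since this cycle is hyperbolic by part (a), it persists when $p_1$ is pushed slightly into the region where $B$ keeps constant sign but thirteen equilibria exist. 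Your outline is compatible with this, but the concrete transversal polygonal is the part that does the work in (b).
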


\begin{figure}[ht]
\centering
\begin{center}
\includegraphics[scale=.8]{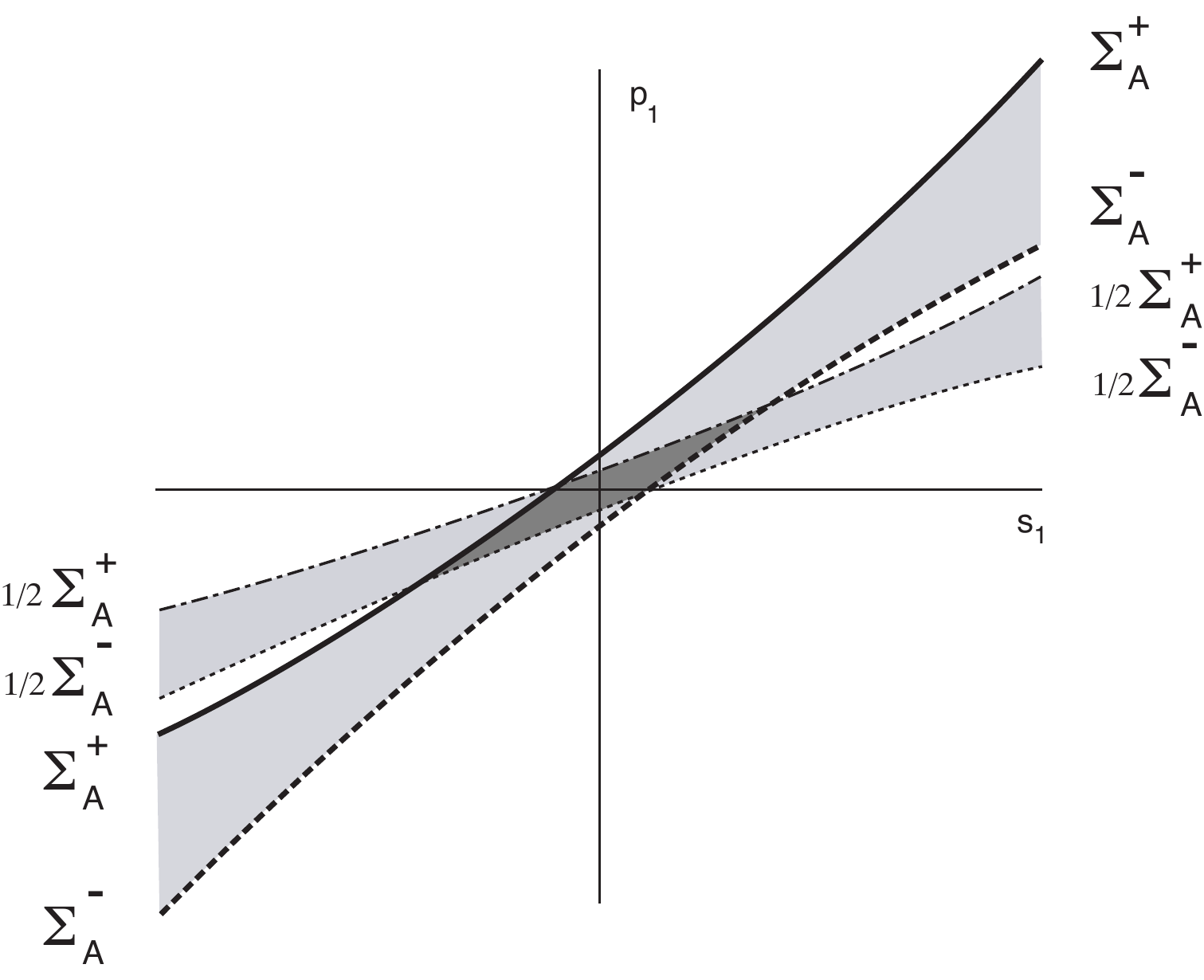}
\caption{Equation \eqref{main equation} has at most one limit cycle surrounding the origin
for $(s_1,p_1)$ outside the dark intersection of the two shaded areas, when $p_2=1$ and $s_2=4$.
Solid line is $\Sigma_A^+$, dashed line stands for $\Sigma_A^-$, dotted line corresponds to 
${\Sigma_A^-}/{2}$ and  dashed-dotted to ${\Sigma_A^+}/{2}$.
For $p_1$ outside the interval $\left(\Sigma_A^-,\Sigma_A^+\right)$ there are at most 7 equilibria, but when $p_1$ lies in that interval, there may be 13 equilibria surrounded by a limit cycle.}\label{FigSigmas}
\end{center}
\end{figure}

 The conditions $(i)$ and $(ii)$  of Theorem~\ref{teorema principal} hold except when $p_1$ lies in 
$\left(\Sigma_A^-,\Sigma_A^+\right)\cap \left(\frac{\Sigma_A^-}{2},\frac{\Sigma_A^+}{2}\right)$.
The intersection of the two intervals is often empty, but this is not always the case as the example of Figure~\ref{FigSigmas} shows. 
Examples of the relative positions of the two intervals are also given in Figure~\ref{A-B-cp} below.

Our strategy for proving Theorem~\ref{teorema principal} will be  to transform the system 
 \eqref{main equation} into a scalar Abel equation and to study it.
Conditions $(i)$ and $(ii)$ define regions where one of the functions in the Abel equation does not change sign. Since these functions correspond to derivatives of the Poincar\'e return map, this imposes an upper bound on the number of limit cycles.
When either $p_1$ lies in the intersection of the two intervals, or $|s_2|\le 1$, this  analysis is not conclusive, and the study of the equations would require other methods.
The qualitative meaning of conditions of statement (a) of the previous theorem is also briefly explained
 and illustrated  in Remark \ref{remark3}  and in
  Figure~\ref{A-B-cp} below.

The paper is organized as follows.
In Section \ref{Preliminary results} we state some preliminary results while in Section \ref{Analysis of the critical points} the study of
the critical points is performed.
Section \ref{Proof of Theorem $1.$} is entirely devoted to the proof of the main theorem of the paper.

\section{Preliminary results}\label{Preliminary results}

We start by obtaining the symmetries of  \eqref{main equation}.
Following \cite{GS85,GS88}, 
a system of differential equations $dx/dt=f(x)$ is said to be $\Gamma-$equivariant if it commutes with the group action of
$\Gamma,$ ie. $f(\gamma x)=\gamma f(x),~\forall\gamma\in\Gamma.$
  Here $\Gamma=  \mathbb{Z}_6$ with the standard action on $\mathbb{C}$ generated by $\gamma_1=\exp(2\pi i/6)$ acting by complex multiplication.
Applying this concept to equation \eqref{main equation} we have the following result.
\begin{prop}
Equation \eqref{main equation} is $\mathbb{Z}_6-$equivariant.
\end{prop}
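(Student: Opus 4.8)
The plan is to verify directly that the vector field $f(z)=(p_1+ip_2)z^2\bar z+(s_1+is_2)z^3\bar z^2-\bar z^5$ commutes with the generator $\gamma_1=\exp(2\pi i/6)=e^{i\pi/3}$ of $\mathbb{Z}_6$ acting on $\mathbb{C}$ by complex multiplication. Since $\mathbb{Z}_6$ is cyclic, it suffices to check $f(\gamma_1 z)=\gamma_1 f(z)$ for this single generator; equivariance under all of $\mathbb{Z}_6$ then follows by iteration. So the whole proof reduces to a monomial-by-monomial computation.

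First I would record how each monomial transforms. Writing $z\mapsto \gamma_1 z$ with $|\gamma_1|=1$ and $\overline{\gamma_1}=\gamma_1^{-1}=\gamma_1^{5}$, a term $z^a\bar z^b$ becomes $\gamma_1^{a}\overline{\gamma_1}^{\,b}z^a\bar z^b=\gamma_1^{a-b}z^a\bar z^b$. For this to equal $\gamma_1\cdot z^a\bar z^b$ (the action on the left-hand side of the equivariance condition) we need $a-b\equiv 1\pmod 6$. Now check the three monomials appearing in $f$: for $z^2\bar z$ we have $a-b=2-1=1$; for $z^3\bar z^2$ we have $a-b=3-2=1$; and for $\bar z^5$ we have $a-b=0-5=-5\equiv 1\pmod 6$. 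Hence every monomial of $f$ satisfies $\gamma_1^{a-b}=\gamma_1$, so $f(\gamma_1 z)=\gamma_1 f(z)$ term by term, and therefore $f(\gamma z)=\gamma f(z)$ for all $\gamma\in\mathbb{Z}_6$. This is exactly the statement that \eqref{main equation} is $\mathbb{Z}_6$-equivariant.

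There is essentially no obstacle here: the only thing to be careful about is the bookkeeping of exponents modulo $6$, in particular noticing that the equivariant term $-\bar z^5$ works precisely because $-5\equiv 1\pmod 6$, which is the reason the exponent $5$ (and not some other power) appears in the normal form. One could phrase the computation slightly more conceptually by noting that $z^a\bar z^b$ spans the isotypic component on which $\gamma_1$ acts as multiplication by $\gamma_1^{a-b}$, and $\mathbb{Z}_6$-equivariant maps are exactly those built from monomials with $a-b\equiv 1\pmod 6$; but for a self-contained proof the direct check on the three terms is shortest and is what I would write out.
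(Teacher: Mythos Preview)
Your proof is correct and follows essentially the same approach as the paper: both verify equivariance monomial by monomial, using that $z^a\bar z^b\mapsto \gamma^{a-b}z^a\bar z^b$ under $z\mapsto\gamma z$, and checking that each term of $f$ has $a-b\equiv 1\pmod 6$. The only cosmetic difference is that the paper phrases the check for all $\gamma_k$ simultaneously rather than just the generator, and groups the first two terms as instances of $z^{\ell+1}\bar z^{\ell}$ (equivariant under every $\mathbb{Z}_n$), singling out $\bar z^5$ as the specifically $\mathbb{Z}_6$-equivariant piece.
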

\begin{proof}
Let $\gamma_k=\exp(2\pi ik/6),~k=0,\ldots,5.$ Then
 a simple calculation shows that $f(\gamma_kz)= \gamma_k f(z)$.
This is true because the monomials in $z$, $\bar{z}$ appearing in the expression of $f$ are the following:
$\bar{z}^5$, which is $\gamma_k$-equivariant, and monomials of the form $z^{\ell+1}\bar{z}^\ell$,
 that are $\mathbb{Z}_n-$equivariant for all $n$.
\end{proof}

Equation \eqref{main equation} represents a perturbation of a Hamiltonian  one and in the following we identify conditions that some parameters have
to fulfill in order to obtain this Hamiltonian. We have the following result.

\begin{teo}\label{teo-Ham}
The Hamiltonian part of $\mathbb{Z}_6-$equivariant equation \eqref{main equation}
is $$\dot{z}=i(p_2+s_2z\bar{z})z^2\bar{z}-\bar{z}^{5}.$$
\end{teo}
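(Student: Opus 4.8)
The plan is to write $z = x + iy$ (or work directly with $z$, $\bar z$) and split the vector field $f(z) = (p_1 + ip_2)z^2\bar z + (s_1 + is_2)z^3\bar z^2 - \bar z^5$ into its Hamiltonian and non-Hamiltonian parts according to the standard criterion: a planar system $\dot z = g(z,\bar z)$ is Hamiltonian precisely when the divergence of the associated real vector field vanishes, equivalently when $\partial_z g + \overline{\partial_z g}$ — more precisely $\mathrm{Re}\,\partial g/\partial z$ in the appropriate sense — is zero. Concretely I would compute $\mathrm{div}$ in complex notation: writing $\dot z = g$, the divergence is $2\,\mathrm{Re}\!\left(\partial g/\partial z\right)$, and I would identify which terms of $g$ contribute a nonzero real part to $\partial g/\partial z$ and which do not.

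Carrying this out term by term: for the monomial $\bar z^5$ one has $\partial(\bar z^5)/\partial z = 0$, so this term is automatically Hamiltonian and contributes nothing to the divergence. For a term $c\, z^{2}\bar z$ with $c = c_1 + ic_2$, one computes $\partial(c z^2\bar z)/\partial z = 2c\, z\bar z = 2c|z|^2$, whose real part is $2c_1|z|^2$; similarly for $c\, z^3\bar z^2$ one gets $\partial/\partial z = 3c z^2\bar z^2 = 3c|z|^4$ with real part $3c_1|z|^4$. Hence the divergence of $f$ is $4p_1|z|^2 + 6s_1|z|^4$, which vanishes identically (in a neighbourhood of the origin, hence everywhere by polynomial identity) if and only if $p_1 = 0$ and $s_1 = 0$. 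Setting $p_1 = s_1 = 0$ in \eqref{main equation} and using $p = ip_2$, $s = is_2$ gives exactly
\[
\dot z = ip_2 z^2\bar z + is_2 z^3\bar z^2 - \bar z^5 = i(p_2 + s_2 z\bar z)z^2\bar z - \bar z^5,
\]
which is the claimed Hamiltonian part; I would also exhibit the Hamiltonian explicitly (a real-valued function $H$ with $\dot z = 2i\,\partial H/\partial\bar z$, which here is a polynomial in $|z|^2$ plus a term producing $\bar z^5$) to confirm the system genuinely is Hamiltonian rather than merely divergence-free, though for a polynomial planar field divergence-free already implies Hamiltonian on $\mathbb{R}^2$.

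The only mild subtlety — and the step I would be most careful about — is fixing conventions so that ``Hamiltonian part'' is unambiguous: one must agree that the decomposition of $f$ into Hamiltonian plus the rest is the one obtained by collecting all terms with zero divergence-contribution, and check that the leftover terms $p_1 z^2\bar z + s_1 z^3\bar z^2$ are indeed the ``dissipative'' perturbation (they are, since their real vector field is a gradient-type/divergence term). There is no real obstacle here; the result is essentially a direct computation of the divergence in complex coordinates together with the observation that $\bar z^{q-1}$-type terms and $z^{\ell+1}\bar z^\ell$-type terms behave oppositely under $\partial/\partial z$. I would present the divergence computation, conclude $p_1 = s_1 = 0$ characterizes the Hamiltonian case, and substitute back.
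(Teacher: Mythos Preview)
Your proposal is correct and follows essentially the same route as the paper: both compute the divergence of the planar field in complex form (the paper writes the Hamiltonian condition as $\partial F/\partial z + \partial \bar F/\partial \bar z = 0$, which is exactly your $2\,\mathrm{Re}(\partial g/\partial z)=0$), obtain $4p_1|z|^2 + 6s_1|z|^4$, and conclude $p_1=s_1=0$. Your extra remarks about divergence-free implying Hamiltonian on the simply connected plane and about the meaning of ``Hamiltonian part'' are sound but go slightly beyond what the paper records.
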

\begin{proof} An equation $\dot{z}=F(z,\bar{z})$ is Hamiltonian if $\frac{\partial F}{\partial z}+\frac{\partial F}{\partial \bar{z}} =0.$ For
equation \eqref{main equation} we have
\begin{equation*}\label{hamilt2}
    \begin{array}{l}
    \displaystyle{\frac{\partial F}{\partial z}=2(p_1+ip_2)z\bar{z}+3(s_1+is_2)z^2\bar{z}^2}\\
    \\
    \displaystyle{\frac{\partial \bar{F}}{\partial \bar{z}}=2(p_1-ip_2)z\bar{z}+3(s_1-is_2)z^2\bar{z}^2}
    \end{array}
\end{equation*}
and consequently it is Hamiltonian if and only if $p_1=s_1=0.$
\end{proof}

As we have briefly said in the introduction,  we reduce the study of system \eqref{main equation} to the analysis of a scalar equation of
Abel type. The first step consists in converting equation \eqref{main equation} from cartesian into polar coordinates.

\begin{lema}\label{lema abelian}
The study of periodic orbits  of equation \eqref{main equation}   that surround the origin,
 for $p_2\ne 0$, reduces to the study of
non contractible solutions that satisfy
$x(0)=x(2\pi)$ of the Abel equation
\end{lema}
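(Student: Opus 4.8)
The plan is to pass to polar coordinates and then apply a Cherkas-type transformation that turns the first-order equation for the orbits into an Abel equation. Writing $z=re^{i\theta}$ in \eqref{main equation} and using $z^2\bar z=r^3e^{i\theta}$, $z^3\bar z^2=r^5e^{i\theta}$, $\bar z^5=r^5e^{-5i\theta}$, then dividing by $e^{i\theta}$ and separating real and imaginary parts, one obtains the polar system
\begin{equation*}
\dot r=p_1r^3+(s_1-\cos6\theta)\,r^5,\qquad\dot\theta=p_2r^2+(s_2+\sin6\theta)\,r^4 .
\end{equation*}
A periodic orbit surrounding the origin cannot reach the critical point at the origin, so $r>0$ along it; it therefore lies in the region $\dot\theta\neq0$, which contains a punctured neighbourhood of the origin because $p_2\neq0$, and there $\theta$ is strictly monotone in $t$ along the orbit. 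Hence $\theta$ may be used as independent variable, and the orbit becomes a solution of
\begin{equation*}
\frac{dr}{d\theta}=\frac{p_1r+(s_1-\cos6\theta)\,r^3}{p_2+(s_2+\sin6\theta)\,r^2}.
\end{equation*}

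Next I would set
\begin{equation*}
x=\frac{r^2}{p_2+(s_2+\sin6\theta)\,r^2},
\end{equation*}
which for each fixed $\theta$ is a Möbius change of the variable $r^2$, non-degenerate precisely because $p_2\neq0$, and which sends the locus $\dot\theta=0$ to $x=\infty$. Differentiating $x$ with respect to $\theta$ and eliminating $r$ through this relation, a direct computation transforms the equation above into an Abel equation $dx/d\theta=A(\theta)x^3+B(\theta)x^2+C(\theta)x$, whose $2\pi$-periodic coefficients are explicit trigonometric polynomials in $6\theta$ with coefficients rational in $p_1,p_2,s_1,s_2$ (note $x=0$, i.e. the origin, is the constant solution corresponding to the vanishing of the $x^0$ term). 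Under this correspondence a closed orbit of \eqref{main equation} that winds once around the origin becomes a solution $x(\theta)$, $\theta\in[0,2\pi]$, with $x(0)=x(2\pi)$ which is non-contractible on the cylinder obtained by identifying $\theta$ with $\theta+2\pi$; conversely any such solution stays bounded and yields a closed orbit around the origin. This is exactly the claimed reduction.

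The bulk of the work is the bookkeeping that produces the explicit coefficients $A,B,C$, which is routine. The step requiring care is the use of $\theta$ as independent variable, i.e. control of the locus $\dot\theta=0$: one must ensure that the periodic orbits under consideration do not cross it, equivalently that $x$ remains finite along them. The hypothesis $p_2\neq0$ secures this near the origin, and it is here that the extra assumptions of the main theorem enter globally — in particular $|s_2|>1$, under which $p_2+(s_2+\sin6\theta)r^2$ keeps constant sign for $r>0$ — so in the present lemma the reduction is to be understood as valid on the region where $\dot\theta$ does not vanish.
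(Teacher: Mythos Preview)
Your approach is essentially the paper's: the only cosmetic difference is that the paper sets $z=\sqrt{r}\,e^{i\theta}$ (so its $r$ is your $r^2$) together with a time rescaling $dt/ds=r$, and then applies the Cherkas change $x=r/\bigl(p_2+r(s_2+\sin6\theta)\bigr)$, which coincides with your $x=r^2/\bigl(p_2+(s_2+\sin6\theta)r^2\bigr)$. Your caveat about the locus $\{\dot\theta=0\}$ is, if anything, more explicit than the paper's treatment, which simply asserts that limit cycles around the origin avoid it and refers to \cite{CGP}.
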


\begin{equation}\label{abelian0}
    \begin{array}{l}
    \displaystyle{\frac{dx}{d\theta}=A(\theta)x^3+B(\theta)x^2+C(\theta)x}
    \end{array}
\end{equation}
where
\begin{equation}\label{abelian1}
    \begin{array}{l}
    \displaystyle{A(\theta)=\frac{2}{p_2}\left(p_1-p_2s_1s_2+p_1s_2^2+(2p_1s_2-p_2s_1)\sin(6\theta)\right)+}\\
    \hspace{1.3cm}+\displaystyle{\frac{2}{p_2}\left((p_2\sin(6\theta)-p_1\cos(6\theta)+p_2s_2)\cos(6\theta)\right),}\\
    \\
    \displaystyle{B(\theta)=\frac{2}{p_2}\left(p_2s_1-2p_1s_2-p_2\cos(6\theta)-2p_1\sin(6\theta) \right),}\\
    \\
    \displaystyle{C(\theta)=\frac{2p_1}{p_2}.}\\
    \\
    \end{array}
\end{equation}
\begin{proof}
Using the change of variables
\begin{equation*}
z=\sqrt{r}(\cos(\theta)+i\sin(\theta))
\end{equation*}
and  the time rescaling, $\frac{dt}{ds}=r,$ it follows that the solutions of equation \eqref{main equation}
are equivalent to those of the polar system
\begin{equation}\label{polar1}
\left\{
    \begin{array}{l}
    \displaystyle{\dot{r}=2rp_1+2r^2\left(s_1-\cos (6\theta)\right)}\\
    \displaystyle{\dot{\theta}=p_2+r\left(s_2+\sin (6\theta)\right)}
    \end{array}.
    \right.
\end{equation}
From equation \eqref{polar1} we obtain
\begin{equation*}
    \begin{array}{l}
    \displaystyle{\frac{dr}{d\theta}=\frac{\displaystyle{2rp_1+2r^2\left(s_1-\cos (6\theta)\right)}}{p_2+r\left(s_2+\sin (6\theta)\right)}.}
    \end{array}
\end{equation*}
Then we apply the Cherkas transformation $x=\displaystyle{\frac{r}{p_2+r\left(s_2+\sin (6\theta)\right)}}$, see \cite{Cherkas},
 to get the scalar equation \eqref{abelian0}. Obviously the limit cycles that surround the origin of equation \eqref{main equation} are transformed
into non contractible periodic orbits of
equation \eqref{abelian0}, as they cannot intersect the set $\{\dot{\theta}=0\}.$ For more details see \cite{CGP}.
\end{proof}

As we have already mentioned in the introductory section, our goal in this work is to apply the methodology developed in \cite{Rafel1} to study
conditions for existence, location and unicity of the limit cycles surrounding $1,~7$ and $13$ critical points.

A natural way for proving the existence of a limit cycle is to show that,
in the Poncar\'e compactification, infinity  has no critical
points and both infinity and  the origin have the same stability. Therefore, we would like to find the  sets of
parameters for which
 these conditions are satisfied. In the following lemma  we determine
 the stability of  infinity.
\begin{lema}\label{lema characterization origin}
Consider equation \eqref{main equation}  in the Poincar\'e compactification of the plane.
Then:
\begin{itemize}
\item [(i)]   There are no critical points at infinity
if and only if  $|s_2|>1;$
\item [(ii)] When $s_2>1,$  infinity is an attractor (resp. a repellor) when $s_1>0$ (resp. $s_1<0$) and the opposite
when $s_2<-1.$
\end{itemize}
\end{lema}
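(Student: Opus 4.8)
The plan is to make the circle at infinity visible and then read both assertions off the flow there. Rather than returning to cartesian coordinates and using the standard Poincar\'e charts $u=y/x$, $v=1/x$, it is more economical to exploit the polar form already at hand: by Lemma~\ref{lema abelian}, for $r>0$ the orbits of \eqref{main equation} coincide, up to a strictly positive rescaling of time, with those of system~\eqref{polar1}. Infinity of the $z$-plane is $r\to+\infty$, so I would put $R=1/r$ and multiply the field by the positive factor $R$; then \eqref{polar1} turns into a system that extends analytically across $R=0$, namely
\[
\frac{dR}{d\tau}=-2R\bigl(s_1-\cos(6\theta)\bigr)-2p_1R^2,\qquad
\frac{d\theta}{d\tau}=\bigl(s_2+\sin(6\theta)\bigr)+p_2R ,
\]
and the circle $\Gamma=\{R=0\}$ — the circle at infinity of \eqref{main equation} — is invariant. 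Since the rescaling factor $R=1/r$ is positive on the finite part, the attractor/repellor character of $\Gamma$ for this extended field coincides with that of infinity for \eqref{main equation}.

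Item $(i)$ is then immediate: on $\Gamma$ the induced equation is $\dot\theta=s_2+\sin(6\theta)$, so the critical points at infinity are exactly the zeros of $\theta\mapsto s_2+\sin(6\theta)$. These exist if and only if $|s_2|\le1$ (for $|s_2|=1$ one gets a tangency, still an equilibrium on $\Gamma$), so \eqref{main equation} has no critical points at infinity precisely when $|s_2|>1$.

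For item $(ii)$, assume $|s_2|>1$. Then $\Gamma$ carries no equilibria, hence it is a periodic orbit of the extended system, traversed with $\theta$ increasing if $s_2>1$ and decreasing if $s_2<-1$. Its stability is governed by the sign of the transverse characteristic exponent along $\Gamma$; from the variational equation $\dot u=-2\bigl(s_1-\cos(6\theta(\tau))\bigr)u$ on $R=0$ together with $d\tau=d\theta/\bigl(s_2+\sin(6\theta)\bigr)$, this is the sign of
\[
\Lambda=\sign(s_2)\int_0^{2\pi}\frac{s_1-\cos(6\theta)}{\,s_2+\sin(6\theta)\,}\,d\theta ,
\]
the prefactor $\sign(s_2)$ recording the sense in which $\Gamma$ is run. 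Now $\int_0^{2\pi}\cos(6\theta)/(s_2+\sin(6\theta))\,d\theta=\tfrac16\bigl[\ln|s_2+\sin(6\theta)|\bigr]_0^{2\pi}=0$, while the classical integral gives $\int_0^{2\pi}d\theta/(s_2+\sin(6\theta))=2\pi\,\sign(s_2)/\sqrt{s_2^2-1}$; substituting, $\Lambda$ has a definite nonzero sign as soon as $s_1\ne0$, and translating ``$\Gamma$ attracting (resp.\ repelling) $\iff$ infinity an attractor (resp.\ repellor)'' yields $(ii)$. The step that requires care — essentially the only non-formal one — is to keep track of the two sign changes occurring when $s_2$ passes from $>1$ to $<-1$, namely in the orientation of $\Gamma$ and in the value of $\int_0^{2\pi}d\theta/(s_2+\sin\theta)$, together with the standard evaluation of this last integral.
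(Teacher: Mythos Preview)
Your approach is the same as the paper's: both set $R=1/r$, rescale time by $R$ to obtain the identical extended system on $\{R\ge 0\}$, read $(i)$ off the zeros of $s_2+\sin(6\theta)$ on $\{R=0\}$, and for $(ii)$ compute the stability integral by the same splitting (the $\cos$-part integrates to zero, the constant part gives $\int_0^{2\pi}d\theta/(s_2+\sin 6\theta)=2\pi\,\sign(s_2)/\sqrt{s_2^2-1}$). The paper records the result as $\int_0^{2\pi}\frac{-2(s_1-\cos 6\theta)}{s_2+\sin 6\theta}\,d\theta=-\dfrac{4\pi s_1\,\sign(s_2)}{\sqrt{s_2^2-1}}$ and invokes Lloyd's criterion.

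There is, however, a genuine loose end in your treatment of $(ii)$. First a sign slip: from $\dot u=-2(s_1-\cos 6\theta)\,u$ the exponent over one $\tau$-period is $-2\Lambda$, not $\Lambda$, so ``attracting'' corresponds to $\Lambda>0$, not $\Lambda<0$. More substantively, you finish with ``yields $(ii)$'' without actually carrying out the check, and your own closing remark already shows why this is delicate: the two sign changes you identify when $s_2$ passes from $>1$ to $<-1$ (the orientation of $\Gamma$ and the $\sign(s_2)$ in $\int d\theta/(s_2+\sin 6\theta)$) \emph{cancel}, leaving $\Lambda=2\pi s_1/\sqrt{s_2^2-1}$ independent of $\sign(s_2)$. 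Thus your computation gives ``infinity attracting $\iff s_1>0$'' in \emph{both} cases $s_2>1$ and $s_2<-1$, which is not what clause $(ii)$ asserts. The paper, by contrast, applies the Lloyd-type criterion directly to the $\theta$-parametrised scalar equation $dR/d\theta$ and reads stability from the sign of the displayed integral \emph{without} your orientation prefactor; that is where the $\sign(s_2)$-dependence in its conclusion comes from. You should carry the sign bookkeeping through explicitly and reconcile it with the statement rather than leaving the last translation step to a sentence.
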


\begin{proof}
The proof follows the same steps as the Lemma $2.2$ in \cite{Rafel1}. After the change of variable $R=1/r$  in system \eqref{polar1}
and reparametrization
$\displaystyle{\frac{dt}{ds}=R},$ we get the system
\begin{equation*}
\left\{
\begin{array}{l}
R'=\frac{dR}{ds}=\displaystyle{-2R(s_1-\cos(6\theta))-2p_1R^2},\\
\\
\theta'= \frac{d\theta}{ds}=\displaystyle{s_2+\sin(6\theta)+p_2R},
\end{array}
\right.
\end{equation*}
giving the invariant set $\{R=0\},$  that corresponds to the infinity of system \eqref{polar1}. Consequently, it has no critical points at infinity
if and only if $|s_2|>1.$ To compute the stability of infinity in
this case,  we follow \cite{Lloyd} and study the stability of $\{R=0\}$ in the
  system above. 
 This stability is given by the sign of
\begin{equation*}
\displaystyle{\int_0^{2\pi}\frac{-2(s_1-\cos 6\theta)}{s_2+\sin(6\theta)}d\theta}=\displaystyle{\frac{-\hbox{sgn}(s_2)4\pi s_1}{\sqrt{s_2^2-1}}},
\end{equation*}
 and the result follows.
\end{proof}

\section{Analysis of the critical points}\label{Analysis of the critical points}

In this section we are going to analyze which conditions must be satisfied
 to ensure   that equation \eqref{polar1} has one, seven or thirteen critical points.
Obviously, the origin of the system is always a critical
  point. We are going to prove that it is \emph{monodromic}: there is no trajectory of the differential equations that approaches the critical
point with a definite limit direction.

For this purpose, let us define the generalized Lyapunov
constants. Consider the solution of the   following 
scalar equation
\begin{equation}\label{def lyap constants1}
\displaystyle{\frac{dr}{d\theta}=\sum_{i=1}^\infty}R_i\left(\theta\right)r^i,
\end{equation}
where $R_i(\theta),~i\geqslant1$ are $T-$periodic functions. To define the generalized Lyapunov constants, consider the solution of
\eqref{def lyap constants1}  that for $\theta=0$ passes through $\rho.$ It may be written as
\begin{equation*}
\begin{array}{l}
\displaystyle{r(\theta,\rho)=\sum_{i=1}^{\infty}u_i(\theta)\rho^i}
\end{array}
\end{equation*}
with $u_1(0)=1,~u_k(0)=0, \forall~k\geqslant2.$
Hence, the return map of this solution is given by the series
\begin{equation*}
\begin{array}{l}
\displaystyle{\Pi(\rho)=\sum_{i=1}^{\infty}u_i(T)\rho^i}.
\end{array}
\end{equation*}
  For a given 
system, in order to determine the stability of a solution, the only significant term in the return map is the first nonvanishing term that
makes it differ from the identity map. Moreover, this term  will determine the stability of
this solution. On the other hand, if we consider a family of systems depending
on parameters, each of the $u_i(T)$ depends on these parameters. We will call
$k^{th}$ generalized Lyapunov constant $V_k=u_k(T)$ the value of this expression
assuming $u_1(T) = 1, u_2(T) =\ldots,= u_{k-1}(T) = 0.$

\begin{lema}\label{lema Lyapunov}
The origin of system \eqref{polar1} is  monodromic, if $p_2\ne 0$.
Moreover, its  stability if given by the sign of $p_1,$ if it is
not zero, and by the sign of $s_1$ if $p_1=0$.
\end{lema}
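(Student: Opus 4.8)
The plan is to show monodromicity by exhibiting that the angular velocity $\dot\theta$ in the polar system \eqref{polar1} does not vanish in a punctured neighbourhood of the origin, and then to read off the stability from the first nonzero generalized Lyapunov constant computed from the $r$-equation written as a series in $r$. First I would observe that from the second equation of \eqref{polar1}, $\dot\theta = p_2 + r(s_2+\sin(6\theta))$, so for $p_2\ne 0$ there is $r_0>0$ such that $\dot\theta$ has the constant sign of $p_2$ whenever $0<r<r_0$; hence no orbit can reach the origin with a definite limiting direction, and the origin is monodromic. This also justifies dividing the two equations of \eqref{polar1} to obtain a scalar equation of the form \eqref{def lyap constants1} near the origin, with $T=2\pi$.

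Next I would expand
\[
\frac{dr}{d\theta}=\frac{2rp_1+2r^2(s_1-\cos 6\theta)}{p_2+r(s_2+\sin 6\theta)}
=\sum_{i\ge 1}R_i(\theta)r^i
\]
by writing $\bigl(p_2+r(s_2+\sin6\theta)\bigr)^{-1}=p_2^{-1}\sum_{k\ge0}\bigl(-r(s_2+\sin6\theta)/p_2\bigr)^k$, which gives in particular $R_1(\theta)=2p_1/p_2$ (a nonzero constant unless $p_1=0$) and $R_2(\theta)=\frac{2}{p_2}\bigl(s_1-\cos6\theta\bigr)-\frac{2p_1}{p_2^2}(s_2+\sin6\theta)$. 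Then I would solve \eqref{def lyap constants1} order by order: with $u_1(0)=1$ one gets $u_1(\theta)=\exp\!\bigl(\int_0^\theta R_1\bigr)=e^{2p_1\theta/p_2}$, so $V_1=u_1(2\pi)=e^{4\pi p_1/p_2}$ and $u_1(2\pi)-1$ has the sign of $p_1$; this proves the first stability claim. When $p_1=0$ we have $u_1\equiv1$, $V_1=1$, and the next constant is $V_2=u_2(2\pi)$ where $u_2'=R_2(\theta)u_1^2+R_1(\theta)u_2 = R_2(\theta)$ (since $R_1=0$ here), so $V_2=\int_0^{2\pi}R_2(\theta)\,d\theta$. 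With $p_1=0$ this integral is $\frac{2}{p_2}\int_0^{2\pi}(s_1-\cos6\theta)\,d\theta = \frac{4\pi s_1}{p_2}$, which is nonzero precisely when $s_1\ne0$ and has the sign of $s_1/p_2$; combined with the sign convention for the return map this yields the stated stability in the case $p_1=0$, $s_1\ne0$.

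I do not expect a serious obstacle here; the lemma is essentially a bookkeeping computation. The one point that needs a little care is matching the \emph{sign conventions}: passing from the original system \eqref{main equation} to \eqref{polar1} involved the time rescaling $dt/ds=r$, which is orientation-preserving near the origin, so stability is not reversed; but one should be careful that "stability given by the sign of $p_1$" refers to the origin being a repellor for $p_1>0$ (since $\dot r\approx 2p_1 r>0$) and an attractor for $p_1<0$, and similarly that the sign of $V_2$ translates correctly into attractor/repellor when $p_1=0$. I would also note, for completeness, that the case $p_1=0$, $s_1=0$ is exactly the Hamiltonian case of Theorem \ref{teo-Ham}, where the origin is a centre, consistent with all Lyapunov constants vanishing; this need not be proved but is worth remarking. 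The only mild subtlety is ensuring the series manipulations are legitimate, which follows from the analyticity of the right-hand side of $dr/d\theta$ in $r$ for $|r|<r_0$ guaranteed by $p_2\ne0$.
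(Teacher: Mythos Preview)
Your proposal is correct and reaches the same conclusions, but the route differs from the paper's in two places worth noting.

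For monodromicity, you argue directly from \eqref{polar1} that $\dot\theta=p_2+r(s_2+\sin6\theta)$ keeps the sign of $p_2$ for $r$ small, which is cleaner than what the paper does: it goes back to the Cartesian form \eqref{cartesian} and computes the characteristic-direction polynomial $xQ-yP$, observing that its lowest-order part is $p_2(x^2+y^2)^2$. Your argument is more elementary and avoids writing out the Cartesian system; the paper's approach has the minor advantage that it fits the textbook framework of arriving directions without first knowing the polar form.

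For the stability, you expand $dr/d\theta$ as a power series in $r$ and compute $V_1,V_2$ directly, whereas the paper computes them from the Abel equation \eqref{abelian0} using $C(\theta)$ and $B(\theta)$, obtaining $V_1=e^{4\pi p_1/p_2}-1$ and, when $p_1=0$, $V_2=4\pi s_1$. The discrepancy with your $V_2=4\pi s_1/p_2$ is only the Jacobian of the Cherkas change $x\approx r/p_2$ near the origin, so the two are consistent. One small slip: you write that $u_1(2\pi)-1$ ``has the sign of $p_1$'', but in fact it has the sign of $p_1/p_2$; what actually settles the first stability claim is exactly the direct observation $\dot r\approx 2p_1 r$ that you make later. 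The same caution applies to your $V_2$: its sign gives the stability of the $\theta$-flow, which coincides with the $t$-flow only when $p_2>0$; for $p_2<0$ both the sign of $V_2$ and the time orientation flip, so the net conclusion (stability governed by the sign of $s_1$) is indeed independent of the sign of $p_2$, as you indicate. With that bookkeeping made explicit, your argument is complete and slightly more self-contained than the paper's, which relies on the preceding lemma and the Cherkas transformation.
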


\begin{proof}
To prove that the origin is monodromic we calculate the
arriving directions of the flow to the origin, see \cite[Chapter IX]{Andronov} for more details. Concretely if we
write system \eqref{polar1} 
in cartesian coordinates we get
\begin{equation}\label{cartesian}
\begin{array}{ll}
\dot{x}=P(x,y)= &\Big(p_1x^3-p_2x^2y+p_1xy^2-p_2y^3\Big)+(s_1-1)x^5-s_2x^4y\\
&+(2s_1+10)x^3y^2-2s_2x^2y^3+(s_1-5)xy^4-s_2y^5\\
&\\
\dot{y}=Q(x,y)=&\Big(p_2x^3+p_1x^2y+p_2xy^2+p_1y^3\Big)+s_2x^5+(s_1+5)x^4y\\
&+2s_2x^3y^2+(2s_1-10)x^2y^3
  +s_2xy^4+(s_1+1)y^5,
\end{array}
\end{equation}
  Then any solutions arriving at the origin will be tangent to the directions $\theta$ that are the zeros of
$r \dot \theta=R(x,y)=xQ(x,y)-yP(x,y)$.  
Since the lowest order term of $R(x,y)$ is $p_2(x^2+y^2)^2$,
which is always different from zero away from the origin, 
it follows that the origin is either a focus or a center.

To know the stability of the origin, we compute the two first  Lyapunov constants. As it is well-known, the two first Lyapunov constants of an
Abel equation are given by$$
V_1=\displaystyle{\exp\left(\int_0^{2\pi}C(\theta)d\theta\right)}-1,\qquad\qquad
V_2=\displaystyle{ \int_0^{ 2\pi }B(\theta)d\theta}.
$$
Applying this to equation \eqref{abelian0} with the expressions given in \eqref{abelian1} we get the following result:
$$
V_1=\displaystyle{\exp\left(4\pi\frac{p_1}{p_2}\right)}-1,
$$
 and if $V_1=0$, then
$
V_2=4\pi s_1,
$
and we get the result.
\end{proof}

\begin{nota}
It is clear that $(V_1,V_2)=(0,0)$ if and only if $(p_1,s_1)=(0,0)$ i.e. equation \eqref{main equation} is hamiltonian (see Theorem \ref{teo-Ham}).
Consequently, as the origin remains being monodromic, in this case it is a center.
\end{nota}

In the next result we study the  equilibria 
of equation \eqref{polar1}   with $r\ne 0$. 
They will be the non-zero critical points of the system.
\begin{lema}\label{lema solutions r theta}
Let $-\pi/6<\theta<\pi/6.$ Then the equilibria of system \eqref{polar1}  with $r\ne 0$ are given by:
\begin{equation}\label{solution r theta }
r={\frac{-p_2}{s_2+\sin\left(2\theta_{\pm}\right)}, 
\qquad\qquad
\theta_{\pm}=\frac{1}{3}\arctan(\Delta_{\pm})},
\end{equation}
where 
$
\Delta_{\pm}=\displaystyle\frac{p_1\pm u}{p_2-p_1s_2+p_2s_1}$
and 
$
u=\sqrt{p_1^2+p_2^2-\left(p_1s_2-p_2s_1\right)^2}$.
\end{lema}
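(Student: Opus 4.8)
\textbf{Proof plan for Lemma~\ref{lema solutions r theta}.}
The plan is to find the zeros of the right-hand side of the polar system \eqref{polar1} directly. An equilibrium with $r\ne 0$ must satisfy simultaneously
\[
p_1+r\left(s_1-\cos(6\theta)\right)=0
\qquad\text{and}\qquad
p_2+r\left(s_2+\sin(6\theta)\right)=0,
\]
the first coming from $\dot r=0$ after dividing by $2r$, the second from $\dot\theta=0$. First I would solve the second equation for $r$, obtaining $r=-p_2/(s_2+\sin(6\theta))$; this is legitimate because $|s_2|>1$ guarantees $s_2+\sin(6\theta)\ne 0$, so there is no loss of solutions. (Note that in the statement the argument is written as $2\theta_\pm$ because of an earlier substitution convention; I would keep the bookkeeping consistent with \eqref{polar1}, where the argument is $6\theta$, and the factor $1/3$ in $\theta_\pm$ then reconciles the two.)

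Next I would substitute this expression for $r$ into the first equation. Clearing the denominator $s_2+\sin(6\theta)$ yields
\[
p_1\bigl(s_2+\sin(6\theta)\bigr)-p_2\bigl(s_1-\cos(6\theta)\bigr)=0,
\]
i.e. a single trigonometric equation in $6\theta$ of the form $a\sin(6\theta)+b\cos(6\theta)=c$ with $a=p_1$, $b=p_2$, $c=p_2s_1-p_1s_2$. The key step is to solve this. I would use the Weierstrass (tangent half-angle) substitution $T=\tan(3\theta)$, so that $\sin(6\theta)=2T/(1+T^2)$ and $\cos(6\theta)=(1-T^2)/(1+T^2)$; restricting to $-\pi/6<\theta<\pi/6$ makes $3\theta\in(-\pi/2,\pi/2)$, so $T$ ranges over all of $\RR$ and the parametrization is a bijection, again incurring no loss of solutions. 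Substituting and multiplying through by $1+T^2$ turns the equation into the quadratic
\[
(p_2-p_1s_2+p_2s_1)T^2-2p_1T-(p_2+p_1s_2-p_2s_1)=0,
\]
whose roots are $T=\Delta_\pm=\dfrac{p_1\pm u}{p_2-p_1s_2+p_2s_1}$ with $u=\sqrt{p_1^2+p_2^2-(p_1s_2-p_2s_1)^2}$, exactly the discriminant obtained from the quadratic formula. Then $\theta_\pm=\tfrac13\arctan(\Delta_\pm)$ and the corresponding $r$ follows from the formula already derived.

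I do not expect a serious obstacle here; the argument is an elementary elimination followed by solving a quadratic. The only points requiring care are the bookkeeping ones: checking that the two denominators that get cleared ($s_2+\sin(6\theta)$ and $p_2-p_1s_2+p_2s_1$) do not vanish in the regime of interest — the first by $|s_2|>1$, the second to be noted as a genuine (degenerate) exception where the quadratic drops to linear and one root escapes to infinity, i.e. to $\theta=\pm\pi/6$ — and verifying that the half-angle substitution on $(-\pi/6,\pi/6)$ loses no solutions. I would also remark that the solutions obtained are precisely the representatives in the fundamental sector $-\pi/6<\theta<\pi/6$; the full set of non-zero equilibria of \eqref{polar1} is then the $\mathbb{Z}_6$-orbit of these, consistent with the count of $6k$ extra critical points (hence a total of $1$, $7$ or $13$) that the later sections require.
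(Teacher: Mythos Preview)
Your proposal is correct and follows essentially the same route as the paper: eliminate $r$ between the two equilibrium equations, apply the tangent half-angle substitution $T=\tan(3\theta)$ on the sector $(-\pi/6,\pi/6)$, and solve the resulting quadratic in $T$ to obtain $\Delta_\pm$. The paper's quadratic $(-p_2+p_1s_2-p_2s_1)t^2+2p_1t+p_2+p_1s_2-p_2s_1=0$ is your quadratic multiplied by $-1$, and your remarks on the degenerate leading coefficient and on the $\mathbb{Z}_6$-orbit are extra care beyond what the paper writes.
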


\begin{proof}
Let $-\pi/6<\theta<\pi/6.$
To compute the critical points of system \eqref{polar1}, we have to solve the following nonlinear system:
\begin{equation}\label{critical points0}
    \begin{array}{l}
    \displaystyle{0=2rp_1+2r^2\left(s_1-\cos (6\theta)\right)}\\
    \displaystyle{0=p_2+r\left(s_2+\sin (6\theta)\right).}
    \end{array}
\end{equation}

Let $x=6\theta$ and $t=\tan\frac{x}{2},$ so $t=\tan3\theta.$ Then, doing some simple computations one gets
\begin{equation}\label{Trigo}
\sin x=\displaystyle{\frac{2t}{1+t^2}}\qquad
\cos x=\displaystyle{\frac{1-t^2}{1+t^2}}.
\end{equation}
Eliminating $r$ from equations \eqref{critical points0}
and using the previous formulas we get
\begin{equation*}
(-p_2+p_1s_2-p_2s_1)t^2+2p_1t+p_2+p_1s_2-p_2s_1=0.
\end{equation*}
Solving the previous equation for $t,$ yields the result.

Finally, consider the interval $-\pi/6\leqslant\theta<\pi/6,$  let $x=6\theta$ and $\tau=\cot\frac{x}{2},$ so $\tau=\cot3\theta.$
The same expressions \eqref{Trigo} for $\sin x$ and $\cos x$ hold if $t$ is replaced by $\tau=1/t$, hence the rest of the proof is applicable.
\end{proof}

We will prove now that simultaneous equilibria  of the type $(r,\theta)=(r,0)$, $(r,\theta)=(\tilde{r},\pi/6)$ 
are not possible.

\begin{lema}
If  $|s_2|>1,$ then there are no parameter values for which there are  simultaneous equilibria
of  system \eqref{polar1} for $\theta=0$ and $\theta=\pi/6$ different from
the origin.
\end{lema}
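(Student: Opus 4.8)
The plan is to substitute the two specific angles directly into the equilibrium equations \eqref{critical points0} and show that the resulting constraints on the parameters are mutually exclusive. First I would note that an equilibrium $(r,0)$ of \eqref{polar1} with $r\ne 0$ must satisfy, after setting $\cos 6\theta=1$ and $\sin 6\theta=0$ and dividing the radial equation by $2r$, the pair
\[
p_1+r(s_1-1)=0,\qquad p_2+rs_2=0 .
\]
Since $|s_2|>1$ forces $s_2\ne 0$, the second relation fixes $r=-p_2/s_2$. In exactly the same way, an equilibrium $(\tilde r,\pi/6)$ with $\tilde r\ne 0$ gives, using $\cos 6\theta=-1$ and $\sin 6\theta=0$,
\[
p_1+\tilde r(s_1+1)=0,\qquad p_2+\tilde r s_2=0 ,
\]
so that $\tilde r=-p_2/s_2$ as well.

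The key observation is then immediate: the two radii coincide, $r=\tilde r=-p_2/s_2$, and substituting this common value into the two radial relations yields $-r(s_1-1)=p_1=-r(s_1+1)$. Cancelling the nonzero factor $-r$ gives $s_1-1=s_1+1$, which is impossible. Hence there is no choice of $(p_1,p_2,s_1,s_2)$ with $|s_2|>1$ for which \eqref{polar1} has nonzero equilibria at both $\theta=0$ and $\theta=\pi/6$.

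I do not expect any genuine obstacle here; the argument is a two-line elimination. The only points that deserve to be stated explicitly are that the hypothesis ``different from the origin'' is precisely $r\ne 0$ and $\tilde r\ne 0$, which is what legitimizes dividing the radial equations by $r$ and $\tilde r$, and that $|s_2|>1$ enters only through $s_2\ne 0$, so that the angular equation can be solved for the radius. If in addition $p_2=0$ the statement is even more transparent, since then the angular equation $p_2+r(s_2+\sin6\theta)=0$ with $|s_2|>1$ forces $r=0$, so no nonzero equilibrium exists at any angle.
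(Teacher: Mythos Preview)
Your argument is correct and follows essentially the same route as the paper: both proofs substitute $\theta=0$ and $\theta=\pi/6$ into the equilibrium equations and perform a linear elimination. The paper eliminates $r$ first to obtain $p_1s_2=p_2(s_1-1)$ and $p_1s_2=p_2(s_1+1)$, forcing $p_2=0$ and hence $r=0$; you instead solve the angular equation for $r=\tilde r=-p_2/s_2$ and compare the radial equations to reach the same contradiction, which is arguably a shade cleaner.
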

\begin{proof}
If we solve
\begin{equation*}
\left\{
\begin{array}{l}
0=p_1+r(s_1-\cos6\theta)\\
0=p_2+r(s_2+\sin6\theta)
\end{array}
\right.
\end{equation*}
for $\theta=0,$ we get $p_1s_2=p_2(s_1-1)$ with the restriction
 $\sign p_2=- \sign s_2$, 
 to have $r$ well defined in the
second equation. On the other hand,
solving the same system for $\theta=\pi/6,$ we get $p_1s_2=p_2(s_1+1)$ with the same restriction $\sign p_2=- \sign s_2.$
This means $p_2=0$ that implies $r=0$  and the result follows.
\end{proof}

In the following we sumarize the conditions that parameters have to fulfill in order that system \eqref{polar1} has exactly
one, seven or thirteen critical points  (see figure~\ref{BD}).

\begin{lema}\label{7critical points}
Consider system \eqref{polar1} with  $|s_2|>1.$ If $s_2p_2\ge 0$, 
then the only equilibrium is the origin.
 If $s_2p_2<0$ 
then the number of equilibria of  system \eqref{polar1} is determined by the quadratic form:
\begin{equation}\label{quadraticForm}
{\mathcal Q}(p_1,p_2)=p_1^2+p_2^2-(p_1 s_2-p_2 s_1)^2=(1-s_2^2)p_1^2+(1-s_1^2)p_2^2+2s_1s_2p_1p_2
\end{equation}
Concretely:
\begin{enumerate}
\item \label{oneEq}
exactly one equilibrium (the origin) if ${\mathcal Q}(p_1,p_2)<0$;
\item \label{sevenEq}
exactly seven equilibria  (the origin and one non-degenerate
saddle-node per sextant)  if ${\mathcal Q}(p_1,p_2)=0$;
\item \label{thirteenEq}
exactly thirteen equilibria  (the origin and two per sextant, a saddle and a node) if ${\mathcal Q}(p_1,p_2)>0$.
\end{enumerate}
\end{lema}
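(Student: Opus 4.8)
The plan is to reduce everything to counting solutions of the two nonlinear equations \eqref{critical points0} in the fundamental sextant $-\pi/6 \le \theta < \pi/6$ and then multiply by six, using the $\mathbb{Z}_6$-equivariance. First I would dispose of the sign condition: in the second equation of \eqref{critical points0}, since $|s_2|>1$ we have $s_2+\sin(6\theta)$ of constant sign equal to $\sign s_2$ for all $\theta$, and $r\ne 0$; hence $p_2 = -r\bigl(s_2+\sin(6\theta)\bigr)$ forces $\sign p_2 = -\sign s_2$, i.e. $s_2p_2<0$ is necessary for any nonzero equilibrium. This gives the first assertion: if $s_2p_2\ge 0$ the only equilibrium is the origin. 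From now on assume $s_2p_2<0$.

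Next I would invoke Lemma~\ref{lema solutions r theta}: the nonzero equilibria in the sextant are in bijection with the real roots $\Delta_\pm = (p_1\pm u)/(p_2-p_1s_2+p_2s_1)$ of the quadratic obtained there, where $u = \sqrt{p_1^2+p_2^2-(p_1s_2-p_2s_1)^2} = \sqrt{\mathcal{Q}(p_1,p_2)}$, since the discriminant of that quadratic is, up to a positive factor, exactly $\mathcal{Q}(p_1,p_2)$. So the trichotomy is immediate at the level of \emph{how many values of $\theta$} there are: no real root if $\mathcal{Q}<0$, one double root (so one $\theta$) if $\mathcal{Q}=0$, two distinct roots (two values of $\theta$) if $\mathcal{Q}>0$. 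For each such $\theta$ the first equation of \eqref{critical points0} then determines $r$ uniquely and, crucially, one must check $r\ne 0$: from $0 = 2rp_1 + 2r^2(s_1-\cos 6\theta)$ with $r\ne 0$ we get $r = -p_1/(s_1-\cos 6\theta)$, which is consistent and nonzero unless $p_1=0$; the case $p_1=0$ should be handled by noting that then $\mathcal{Q} = p_2^2 + (1-s_1^2)p_2^2$... — more carefully, when $p_1=0$ one checks directly that $\mathcal{Q}(0,p_2)=p_2^2(1-s_1^2)$ and the equilibrium equations still have the stated number of solutions, with the degenerate subcases absorbed into the boundary $\mathcal{Q}=0$. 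Multiplying by the six sextants gives $0$, $6$, or $12$ nonzero equilibria, hence $1$, $7$, or $13$ total; I would also remark that the preceding lemma rules out coincidences at the sextant boundaries $\theta=0,\pi/6$, so the count across sextants is genuinely additive with no double-counting.

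Finally I would establish the \emph{nature} of the nonzero equilibria. For (\ref{thirteenEq}), since $\mathcal{Q}>0$ gives two transverse roots $\Delta_-\ne\Delta_+$, each equilibrium is non-degenerate, and I would compute the sign of the determinant of the Jacobian of \eqref{polar1} (equivalently of \eqref{cartesian}) at each: the two points in a sextant come with opposite signs of this determinant — one saddle ($\det<0$) and one antisaddle ($\det>0$), the latter being a node rather than a focus because it lies on the curve $\dot\theta = 0$ where the linearization has a real eigenvalue direction forced by the geometry. For (\ref{sevenEq}), $\mathcal{Q}=0$ means the two roots collide, so the saddle and node of the previous case merge into a single equilibrium with a zero eigenvalue; I would identify it as a (non-degenerate) saddle-node by checking that the quadratic part along the center direction is nonzero, which reduces to verifying that $\mathcal{Q}=0$ is a simple zero of $\mathcal{Q}$ along a generic one-parameter slice, equivalently that $\nabla\mathcal{Q}\ne 0$ there — and $\nabla\mathcal{Q}=0$ only at the origin of parameter space. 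The main obstacle I anticipate is precisely this last classification step: getting the sign of the Jacobian determinant at $(r,\theta_\pm)$ into closed form and relating it cleanly to the sign of $u$ (hence to $\pm$) is the one genuinely computational point; everything upstream is bookkeeping with the quadratic discriminant. I would organize that computation by substituting the equilibrium relations $p_1 = -r(s_1-\cos 6\theta)$, $p_2 = -r(s_2+\sin 6\theta)$ back into $\det J$ to eliminate $p_1,p_2$, after which the sign should collapse to something proportional to $\pm u\cdot(\text{positive})$.
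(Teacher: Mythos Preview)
Your counting argument is essentially the paper's: reduce to the sextant, invoke Lemma~\ref{lema solutions r theta}, recognize the discriminant $u^2$ as ${\mathcal Q}(p_1,p_2)$, and split on its sign. A small clean-up: for the positivity of $r$ you do not need to go through the first equation and worry about $p_1=0$; the second equation of \eqref{critical points0} already gives $r=-p_2/(s_2+\sin 6\theta)$, which is automatically positive once $s_2p_2<0$, with no special cases.

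The substantive difference is in how the \emph{type} of the degenerate equilibria is established when ${\mathcal Q}=0$. The paper does not compute the center-manifold quadratic term at all. Instead, after showing that each of the six semi-hyperbolic points has one zero and one nonzero eigenvalue, it uses a global index argument: since $|s_2|>1$ there are no critical points at infinity, so the equator in the Poincar\'e compactification is a periodic orbit; the sum of the indices of all equilibria it encloses must be $+1$; the origin is a focus of index $+1$; the remaining six symmetric points must each have index $0$; a semi-hyperbolic point of index $0$ is a saddle--node. This bypasses any local normal-form computation.

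Your proposed route for the same step has a genuine gap. You write that checking nonvanishing of the quadratic term along the center direction ``reduces to verifying that ${\mathcal Q}=0$ is a simple zero of ${\mathcal Q}$ along a generic one-parameter slice, equivalently that $\nabla{\mathcal Q}\ne 0$.'' That conflates two different nondegeneracy conditions: the phase-space condition (second-order term on the center manifold is nonzero) and the parameter-space transversality condition (the bifurcation value is crossed simply). In saddle--node bifurcation theory these are independent hypotheses; knowing that equilibria appear and disappear as parameters move does not by itself tell you the singular point at the bifurcation value is a saddle--node rather than something more degenerate. You would either have to carry out the center-manifold reduction honestly, or adopt the paper's index shortcut.

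A second, smaller point: your argument that the index-$+1$ equilibrium in case~(\ref{thirteenEq}) is a node ``because it lies on the curve $\dot\theta=0$ where the linearization has a real eigenvalue direction forced by the geometry'' does not hold as stated. The set $\{\dot\theta=0\}$ is not invariant, so membership in it does not produce a real eigenvector of the linearization. The paper handles this case by computing the eigenvalues explicitly as $R_\pm\pm S_\pm$ and working with the product $R_\pm^2-S_\pm^2$ to separate the saddle from the index-$+1$ point; to actually conclude \emph{node} rather than focus you would still need to check the sign of the radicand defining $S_\pm$.
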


\begin{figure}[ht]
\centering
\begin{center}
\includegraphics[scale=1]{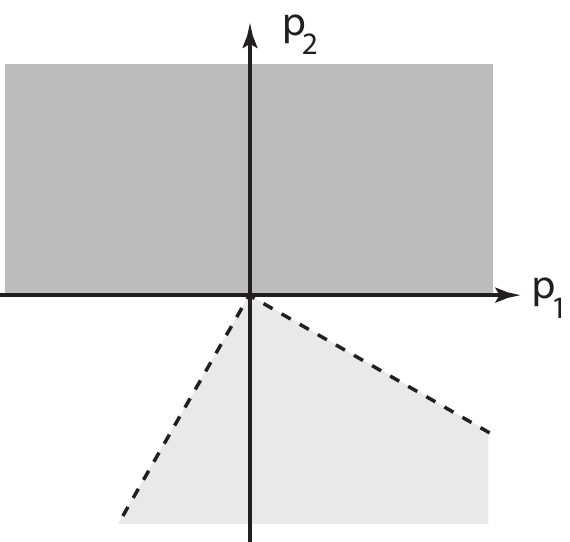}\caption{ Bifurcation diagram for equilibria of equation \eqref{polar1} with $s_2>1$
on the $(p_1,p_2)$ plane. For $(p_1,p_2)$ in the shaded regions, the only equilibrium is the origin. In the white region there are two other
equilibria, a saddle and a node, in each sextant. On the dotted line, the saddle and the node come together into a saddle-node  in each sextant.
When $p_2$ tends to 0 in the  white region, the two equilibria tend to the origin.
 For $s_2<-1$, the diagram is obtained by reflecting on the $p_1$ axis.}\label{BD}
\end{center}
\end{figure}

\begin{proof}
We will do the proof for the case $s_2>1,$  the other case being analogous.

There will exist more critical points different from the origin if and only if the formulas  \eqref{solution r theta } given in
Lemma \ref{lema solutions r theta} are realizable  with $r>0$.
This will not occur either when  the expression for $r$ is negative  (that corresponds to  the condition $p_2\ge 0$) or
when  the discriminant in  $\Delta_{\pm}$ is negative (that corresponds to ${\mathcal Q}(p_1,p_2)<0$).

In order to have exactly six more critical points, two conditions have to be satisfied: $p_2<0$ to ensure positive values
of $r,$ as $s_2>1$, and the quantities $\Delta_{\pm}$ have to coincide, {\it i.e.}
the discriminant $u$ in Lemma \ref{lema solutions r theta} has to be zero, that is the condition (\ref{sevenEq}) given in the statement of the lemma.
Hence, $r_+=r_-$ and $\theta_+=\theta_-$.

We prove now that these additional critical points are saddle-nodes. 
By symmetry, we only need to prove it for one of them.
The Jacobian matrix of system \eqref{polar1} evaluated at $(r_+,\theta_+)$ is
\begin{equation}\label{jacobian general}
J_{(r_+,\theta_+)}=
\left(\begin{matrix}
            2p_1+4r_+(s_1-\cos(6\theta_+)&12r_+^2\sin(6\theta_+)\\
            s_2+\sin(6\theta_+)&6r_+\cos(6\theta_+)\\
        \end{matrix}\right). 
\end{equation}

Evaluating the Jacobian matrix \eqref{jacobian general} at the concrete expression  \eqref{solution r theta } of the critical point and taking into account the
condition $p_1^2+p_2^2=(p_1s_2-p_2s_1)^2,$ the eigenvalues of the matrix are
$$
\lambda_1=0
\qquad\qquad
\lambda_2=2p_1-\displaystyle{\frac{6p_2(p_2+p_2s_1-p_1s_2)}{(p_2s_2+p_1)(1+s_1)-p_1s_2^2}}.
$$

Therefore,
$(r_+,\theta_+)$ has a zero eigenvalue. To show that these critical points are saddle-nodes  we use the following reasoning.
It is well--known, see  for instance \cite{Andronov},  that the sum of the indices of all critical points contained in the interior of a limit cycle of a planar system
is $+1.$ As under our hypothesis the infinity does not have critical points on it, it is a limit cycle of the system and it has seven singularities
in its interior:
the origin, that is a focus and hence has index +1,
and 6 more critical points, all of the same type because of the symmetry. Consequently, the index of these critical points must be 0. As we have
proved that they are semi-hyperbolic critical points then they must be saddle-nodes.

In order that equation \eqref{polar1} has exactly thirteen critical points it is enough that $r>0$ ({\it i.e.} $p_2<0$) and
the discriminant in $\Delta_{\pm}$ of Lemma~\ref{lema solutions r theta} is positive, that
is the condition (\ref{thirteenEq}) of the statement.

To get the stability of the twelve critical points  we evaluate  the Jacobian matrix \eqref{jacobian general} 
at these critical points, 
and taking into account the condition
$p_1^2+p_2^2>(p_1s_2-p_2s_1)^2,$ the eigenvalues of the critical points
$(r_+,\theta_+)$ are $\lambda_{1,2}=R_+\pm S_+,$
while the eigenvalues of $(r_-,\theta_-)$ are $\alpha_{1,2}=R_-\pm S_-,$ where
\begin{equation*}
\begin{array}{l}
R_{\pm}=\displaystyle{\frac{p_1^2s_1\pm3p_2^2s_1\mp2p_1p_2s_2+2p_1u}{\mp p_1s_1\mp p_2s_2+u}},\\\\
S_{\pm}=\displaystyle{\frac{\sqrt{48(p_1^2+p_2^2)u(u\mp p_1s_1\mp p_2s_2)+(2p_1^2s_1+6p_2^2s_1-4p_1p_2s_12\pm 4p_1u)^2}}{p_1\mp p_1s_1\mp p_2s_2} }.
\end{array}
\end{equation*}

In the following we will show that one of the critical points has index $+1,$ while the other is a saddle.

Doing some computations one gets that the product of the eigenvalues of $(r_+,\theta_+)$ and $(r_-,\theta_-)$ are, respectively
\begin{equation*}
\begin{array}{l}
R_+^2-S_+^2=\displaystyle{\frac{-12(p_1^2+p_2^2)u}{u-p_1s_1-p_2s_2}}          \\
\\
R_-^2-S_-^2=\displaystyle{\frac{-12(p_1^2+p_2^2)u}{u+p_1s_1+p_2s_2}} .
\end{array}
\end{equation*}
The numerator of both expressions is negative.

If $p_1s_1+p_2s_2>0$ then $S_-^2>0$ and $R_-^2-S_-^2<0$. Consequently the critical point $(r_-,\theta_-)$ is a saddle.
On the other hand, the denominator of $R_+^2-S_+^2$  is negative. This is true as
\begin{eqnarray*}
 &&u-p_1s_1-p_2s_2<0\iff u^2< (p_1s_1+p_2s_2)^2\iff \\
&&\iff p_1^2+p_2^2-p_2^2s_1^2-p_1^2s_2^2+2p_1p_2s_1s_2<p_1^2s_1^2+p_2^2s_2^2+2p_1p_2s_1s_2\\
&&\iff 0<(p_1^2+p_2^2)(s_1^2+s_2^2-1),
\end{eqnarray*}
that is always true.  Hence, $R_+^2-S_+^2>0$ and $(r_+,\theta_+)$ has index $+1.$

If $p_1s_1+p_2s_2<0,$ doing similar reasonings one gets that the critical point $(r_+,\theta_+)$ is a saddle
while $(r_-,\theta_-)$ has index $+1.$
\end{proof}

Note that  ${\mathcal Q}$ is a quadratic form on $p_1,p_2$, and its determinant $1-s_1^2-s_2^2,$  is negative if
$s_2>1.$
Hence, for each choice of $s_1, s_2$ with $s_2>1$, the
points where ${\mathcal Q}(p_1,p_2)$ is negative lie on two sectors, delimited by the two perpendicular lines where ${\mathcal Q}(p_1,p_2)=0$.
Since ${\mathcal Q}(0,p_2)=(1-s_2^2)p_2^2<0$ for $s_2>1$, then the sectors where there are two equilibria in each sextant do not
include the $p_2$ axis, as in Figure~\ref{BD}.

\begin{lema}\label{signA}
Consider $|s_2|>1$ and define the following two numbers:
\begin{equation*}
\Sigma_A^-=\frac{p_2s_1s_2- \sqrt{p_2^2\left(s_1^2+s_2^2-1\right)}}{s_2^2-1},
\hspace{0.3cm}\Sigma_A^+=\frac{p_2s_1s_2+ \sqrt{p_2^2\left(s_1^2+s_2^2-1\right)}}{s_2^2-1}.
\end{equation*}
Let $A(\theta)$ be the function given in Lemma \ref{lema abelian}. Then the function $A(\theta)$ changes sign if and only if
$p_1\in\left(\Sigma_A^-,\Sigma_A^+\right).$
\end{lema}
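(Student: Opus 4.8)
\textbf{Proof plan for Lemma \ref{signA}.}

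The plan is to treat $A(\theta)$ as a trigonometric polynomial in the single variable $\phi=6\theta$ and decide whether it takes both signs by computing its extreme values over $\phi\in[0,2\pi)$. First I would rewrite the expression for $A(\theta)$ in \eqref{abelian1} by expanding the products and collecting the terms involving $\cos^2(6\theta)$, $\sin(6\theta)\cos(6\theta)$, $\sin(6\theta)$ and $\cos(6\theta)$; using $\cos^2(6\theta)=\tfrac12(1+\cos(12\theta))$ and $\sin(6\theta)\cos(6\theta)=\tfrac12\sin(12\theta)$ one gets $A$ as a constant plus a first harmonic in $6\theta$ plus a second harmonic in $6\theta$. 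The key simplification I expect is that the dependence of $A$ on $\theta$ can be organised so that $A(\theta)=\frac{2}{p_2}\bigl(\alpha+(\text{something})\bigr)$ where the genuinely $\theta$-dependent part, after completing the square, is a perfect square plus a constant — this is strongly suggested by the shape of $\Sigma_A^{\pm}$, whose product is $\dfrac{p_2^2 s_1^2 s_2^2-p_2^2(s_1^2+s_2^2-1)}{(s_2^2-1)^2}$ and whose relation to $p_1$ governs a sign. Concretely I would aim to show $\dfrac{p_2}{2}A(\theta)=(s_2^2-1)\,(p_1-\Sigma_A^-)(p_1-\Sigma_A^+)\cdot g(\theta)+h(\theta)$ for suitable nonnegative $g$, or more plausibly that the minimum of $A$ over $\theta$ has the sign of $(p_1-\Sigma_A^-)$ and the maximum the sign of $(p_1-\Sigma_A^+)$ (up to the positive factor $2/p_2$ when $p_2>0$, with signs swapped when $p_2<0$, which is harmless since $A$ changing sign is what matters).

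The concrete steps are: (1) expand and reduce $A(\theta)$ to the form $A(\theta)=a_0+a_1\cos(6\theta)+b_1\sin(6\theta)+a_2\cos(12\theta)+b_2\sin(12\theta)$ with coefficients explicit in $p_1,p_2,s_1,s_2$; (2) observe from the explicit coefficients that the second–harmonic part combines with part of the constant into $\tfrac{1}{p_2}(p_2\sin(6\theta)+\cdots)^2$-type square coming from the last summand $(p_2\sin(6\theta)-p_1\cos(6\theta)+p_2s_2)\cos(6\theta)$ in \eqref{abelian1}; (3) thereby write $A$ as $\tfrac{2}{p_2}\bigl[\,P(p_1)+Q(\theta)\,\bigr]$ where $Q(\theta)\ge 0$ attains $0$, with $P(p_1)$ linear or quadratic in $p_1$; (4) conclude that $\min_\theta A$ and $\max_\theta A$ are affine functions of the relevant quadratic in $p_1$, and identify the two roots in $p_1$ of ``$\min A=0$'' and ``$\max A=0$'' as exactly $\Sigma_A^\pm$ by matching with the closed-form roots of $(s_2^2-1)p_1^2-2p_2s_1s_2\,p_1+p_2^2(s_1^2-1)=0$, whose solutions are precisely $\Sigma_A^\pm$ (note $\Sigma_A^-\Sigma_A^+=\dfrac{p_2^2(s_1^2-1)}{s_2^2-1}$ and $\Sigma_A^-+\Sigma_A^+=\dfrac{2p_2s_1s_2}{s_2^2-1}$). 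Then $A$ changes sign precisely when $\min A<0<\max A$, i.e. when $p_1$ lies strictly between the two roots, which is the assertion; the hypothesis $|s_2|>1$ guarantees $s_2^2-1>0$ so the roots are real and ordered and the leading coefficient has a fixed sign.

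The main obstacle I anticipate is purely computational bookkeeping: carrying out the expansion in step (1) without error and then recognising the ``perfect square plus constant'' structure in steps (2)–(3). It is conceivable that $A$ is not globally a square-plus-constant but only so after one checks that the first and second harmonic amplitudes are tied together by a single parameter (they should be, since $A$ really depends on $\theta$ only through $\cos(6\theta)$ and $\sin(6\theta)$ via the one expression $(p_2\sin(6\theta)-p_1\cos(6\theta)+p_2s_2)$ appearing squared-ish in \eqref{abelian1}); if the reduction is messier, the fallback is to compute $\max_\theta A$ and $\min_\theta A$ directly by writing the first-plus-second harmonic as a function of $u=\tan(3\theta)$ (as in Lemma \ref{lema solutions r theta}) or by Lagrange multipliers on $\cos^2+\sin^2=1$, solving the resulting quartic/quadratic in $u$ and checking that the discriminant condition for $A$ to have a real zero in $\theta$ collapses to $\mathcal Q$-type expression $p_2^2(s_1^2+s_2^2-1)\ge 0$ together with the between-the-roots condition on $p_1$. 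Either way the sign change of $A$ is equivalent to a single quadratic inequality in $p_1$ whose roots are $\Sigma_A^\pm$, and matching constant and linear coefficients finishes the proof.
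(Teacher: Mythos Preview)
Your fallback route is essentially what the paper does, and it works cleanly: set $x=\sin(6\theta)$, $y=\cos(6\theta)$, solve the system $A(x,y)=0$, $x^{2}+y^{2}=1$, and observe that two of the four solutions have $x=-s_{2}$ (impossible since $|s_{2}|>1$) while the other two are real with $|x|\le 1$ iff the discriminant $\Delta=p_{1}^{2}+p_{2}^{2}-(p_{2}s_{1}-p_{1}s_{2})^{2}$ is positive. Viewed as a quadratic in $p_{1}$ with leading coefficient $1-s_{2}^{2}<0$, the condition $\Delta>0$ is exactly $p_{1}\in(\Sigma_{A}^{-},\Sigma_{A}^{+})$. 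So your plan succeeds; you can skip the extrema computation and go straight to the fallback.

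Your primary hope of a ``perfect square plus constant'' does not quite pan out as stated, but something cleaner is hiding. Using $c^{2}=1-s^{2}$ in the expression for $\tfrac{p_{2}}{2}A$ (with $s=\sin 6\theta$, $c=\cos 6\theta$) and regrouping gives the factorisation
\[
\frac{p_{2}}{2}\,A(\theta)=\bigl(\sin(6\theta)+s_{2}\bigr)\Bigl[p_{1}\bigl(\sin(6\theta)+s_{2}\bigr)+p_{2}\bigl(\cos(6\theta)-s_{1}\bigr)\Bigr].
\]
The first factor has the constant sign of $s_{2}$ since $|s_{2}|>1$, so $A$ changes sign iff the bracketed factor does; that factor is a single first harmonic $p_{1}\sin(6\theta)+p_{2}\cos(6\theta)$ shifted by the constant $p_{1}s_{2}-p_{2}s_{1}$, and it changes sign iff $(p_{1}s_{2}-p_{2}s_{1})^{2}<p_{1}^{2}+p_{2}^{2}$, i.e.\ $\Delta>0$. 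This yields the result in two lines and explains transparently why the same discriminant $\Delta$ governs both the sign of $A$ and the number of equilibria in Lemma~\ref{7critical points}. The paper does not exploit this factorisation; your instinct that $A$ has hidden algebraic structure was right, only the form is a product rather than a sum of a square and a constant.
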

\begin{proof}
Writing $x=\sin(6\theta),~y=\cos(6\theta)$, the function $A(\theta)$ in \eqref{abelian1} becomes
\begin{equation*}
A(x,y)=\displaystyle{\frac{2}{p_2}\left(p_1-p_2s_1s_2+p_1s_2^2+(2p_1s_2-p_2s_1)x+  (p_2x-p_1y+p_2s_2)y\right)}.
 \end{equation*}
Next we solve the set of equations
\begin{eqnarray*}
 A(x,y)=0,\\
x^2+y^2=1.
\end{eqnarray*}
to get the solutions
\begin{equation*}
\begin{array}{l}
x_1=-s_2,~y_1=\sqrt{1-s_2^2}\\
x_2=-s_2,~y_2=-\sqrt{1-s_2^2}\\\\
x_\pm=\displaystyle{\frac{p_1p_2s_1-p_1^2s_2\pm p_2\sqrt{p_1^2+p_2^2-(p_2s_1-p_1s_2)^2}}{p_1^2+p_2^2}}\\
\\
y_\pm=\displaystyle{\frac{p_2^2s_1-p_1p_2s_2\mp p_1 \sqrt{p_1^2+p_2^2-(p_2s_1-p_1s_2)^2}}{p_1^2+p_2^2}}\\
\end{array}
\end{equation*}
Observe now the the two first pairs of solutions $(x_1,y_1), (x_2,y_2)$ cannot be solutions of our equation $A(\theta)=0$ since $x=\sin(6\theta)=-s_2<-1.$

On the other hand, if we look for the intervals where the function $A(\theta)$ does not change sign we have two possibilities:
either $|x_\pm|>1$ (and consequently $|y_\pm|>1$) or  the discriminant of $x_\pm,$ $\Delta=p_1^2+p_2^2-(p_2s_1-p_1s_2)^2$ is negative or zero. In the  case
$\Delta<0$,
the solutions will be complex non-real,  and in the second case $\Delta=0$, the function will have a zero but a double one, and it will not
change its sign.

The first possibility turns out to be impossible in our region of parameters. The second possibility leads
to the region $p_1\in\mathcal{R}\setminus (\Sigma_A^+,\Sigma_A^-).$
\end{proof}

\begin{lema}\label{sigmalemaB}
Consider $|s_2|>1$ and define the following two numbers:
\begin{equation*}
\Sigma_B^\pm=\frac{\Sigma_A^\pm}{2}.
\end{equation*}
Let $B(\theta)$ be the function given in Lemma \ref{lema abelian}. Then the function $B(\theta)$ changes sign if and only if
$p_1\in\left(\Sigma_B^-,\Sigma_B^+\right).$
\end{lema}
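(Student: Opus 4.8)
\textbf{Plan of proof for Lemma~\ref{sigmalemaB}.}
The approach mirrors the proof of Lemma~\ref{signA} almost verbatim, so the plan is to reduce the sign-change question for $B(\theta)$ to an intersection problem between a line and the unit circle. First I would substitute $x=\sin(6\theta)$, $y=\cos(6\theta)$ into the expression for $B(\theta)$ from \eqref{abelian1}, obtaining
\begin{equation*}
B(x,y)=\frac{2}{p_2}\left(p_2s_1-2p_1s_2-p_2y-2p_1x\right),
\end{equation*}
which is \emph{affine} in $(x,y)$ — in fact simpler than $A$, since there is no quadratic term. Then I would solve the system $B(x,y)=0$, $x^2+y^2=1$ for $(x,y)$, obtaining the two roots $(x_\pm,y_\pm)$ explicitly as rational functions of $p_1,p_2,s_1,s_2$ with a square-root discriminant.

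The key observation is that the line $B(x,y)=0$, namely $2p_1x+p_2y=p_2s_1-2p_1s_2$, meets the unit circle in two real points precisely when the distance from the origin to that line is $\le 1$; equivalently, when the discriminant of the quadratic obtained after elimination is nonnegative. A short computation should show that this discriminant, after clearing positive factors, is a quadratic in $p_1$ whose roots are exactly $\Sigma_B^\pm=\Sigma_A^\pm/2$. Concretely, I expect the condition ``$B$ changes sign'' to be equivalent to $4p_1^2+p_2^2>(p_2s_1-2p_1s_2)^2$, i.e. $(1-s_2^2)(2p_1)^2+2s_1s_2 p_2(2p_1)+(1-s_1^2)p_2^2>0$, which is just ${\mathcal Q}(2p_1,p_2)>0$ with ${\mathcal Q}$ the quadratic form of \eqref{quadraticForm}; since the roots in $p_1$ of ${\mathcal Q}(p_1,p_2)=0$ are $\Sigma_A^\pm$ by Lemma~\ref{signA}, the roots of ${\mathcal Q}(2p_1,p_2)=0$ are $\Sigma_A^\pm/2=\Sigma_B^\pm$. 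As in Lemma~\ref{signA}, I must also check the boundary case (discriminant zero), where $B$ has a double zero and hence does not change sign, so the sign change occurs exactly on the open interval $p_1\in(\Sigma_B^-,\Sigma_B^+)$.

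The only genuine subtlety — the analogue of the step in Lemma~\ref{signA} where one discards the spurious roots $x=-s_2$ and argues ``$|x_\pm|>1$ is impossible in our parameter region'' — is to confirm that when the discriminant is positive the resulting roots $x_\pm$ genuinely satisfy $|x_\pm|<1$, so that they correspond to real values of $\theta$ and $B$ truly changes sign, rather than the line being tangent to or missing the arc realized by $(\sin 6\theta,\cos 6\theta)$. Here this is automatic: since the circle $x^2+y^2=1$ is traced completely as $6\theta$ runs over a period, any real intersection point of the line with the circle is attained, so no separate exclusion argument is needed beyond the distance-to-line computation. This is also why I expect the main (and essentially only) obstacle to be purely bookkeeping: carefully carrying the $1/p_2$ factor and the signs through the discriminant computation so that the quadratic in $p_1$ comes out as ${\mathcal Q}(2p_1,p_2)$ up to a manifestly positive multiple, thereby identifying its roots with $\Sigma_B^\pm$.
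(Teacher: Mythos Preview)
Your proposal is correct and follows essentially the same route as the paper: substitute $x=\sin(6\theta)$, $y=\cos(6\theta)$, intersect $B(x,y)=0$ with the unit circle, and read off the sign-change condition from the discriminant. Your extra observation that the discriminant condition is literally ${\mathcal Q}(2p_1,p_2)>0$, which immediately gives the roots $\Sigma_A^\pm/2$, is a clean shortcut that the paper leaves implicit in its ``direct computations''; and your remark that no spurious-root exclusion is needed (because $B$ is affine and the full circle is traced) correctly identifies why this lemma is strictly easier than Lemma~\ref{signA}.
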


\begin{proof}
 By direct computations with the same substitution as the one in the proof of the previous lemma, $x=\sin(6\theta), y=\cos(6\theta),$
we get that the zeroes of the system
\begin{eqnarray*}
B(x,y)=0,\\
x^2+y^2=1,
\end{eqnarray*}
are
\begin{eqnarray*}
x_\pm= \frac{2p_1p_2s_1-4p_1^2s_2\pm p_2\sqrt{4p_1^2+p_2^2-(p_2s_1-2p_1s_2)^2}}{4 p_1^2+p_2^2},\\
y_{\pm}=\frac{p_2^2s_1-2p_1p_2s_2\mp 2p_1\sqrt{4p_1^2+p_2^2-(p_2s_1-2p_1s_2)^2}}{4 p_1^2+p_2^2}.
\end{eqnarray*}
Applying  arguments similar to those in the previous proof,  we get that the function $B(\theta)$ will not change sign if and only if
$p_1\not\in(\Sigma_B^+,\Sigma_B^-).$

\end{proof}

\begin{nota}\begin{itemize}
\item In general systems there are examples 
for which the function $A(\theta)$ changes sign but $B(\theta)$ does not
and vice-versa.

\item The condition given in Lemma \ref{signA} under which the function $A(\theta)$ does not change sign is closely related to the number of critical points of system \eqref{polar1}. 
 More precisely, the condition $\Delta<0$ in the proof of Lemma \ref{signA} is the same as the condition for 
 existence of a unique critical point in
 system \eqref{polar1},
while the condition $\Delta=0$, together with $p_2<0$, is equivalent to the existence of
exactly 7 critical points. 

 \begin{figure}[h!]
 \includegraphics[width=11cm]{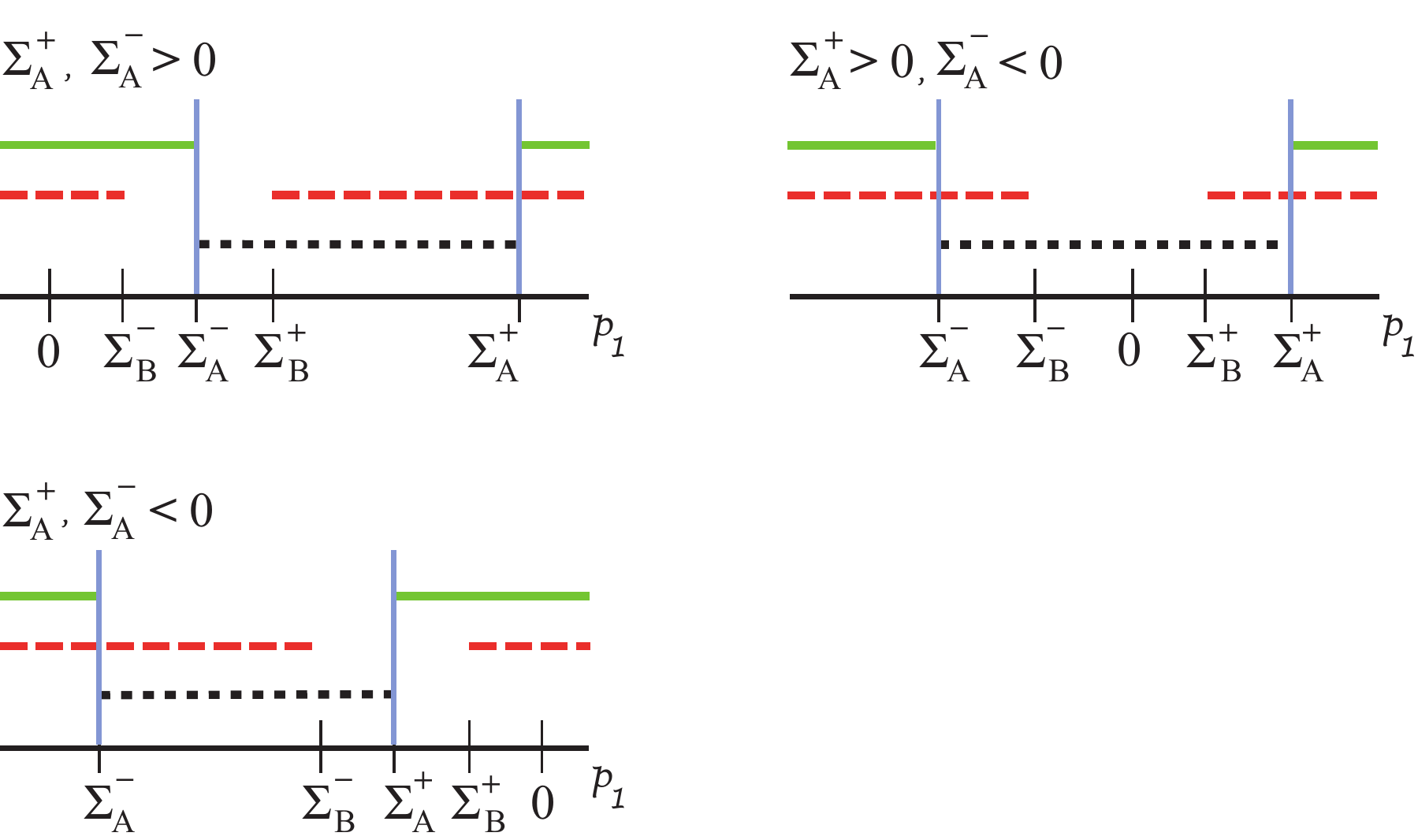}
\caption{A sketch of  intervals where the conditions of Theorem~\ref{teorema principal} are satisfied,  with $s_2>1,p_2<0.$
The result follows from the relationship between the function $A(\theta)$
 and $B(\theta)$ changing sign and the number of critical points of  \eqref{polar1}.
 The green solid line indicates where the function $A(\theta)$  does not change sign, the red dashed one where $B(\theta)$
 does not change sign, while the dotted line represents
 the interval where 13 critical points exist. The blue vertical lines (that correspond to $\Sigma_A^\pm$)
 stand for 7 critical points.
 There is another possibility, not shown here, that the two intervals are disjoint, see Figure~\ref{FigSigmas}.\label{A-B-cp}}
 \end{figure}
\end{itemize}
\end{nota}

In the following we will need some results on Abel equations proved in \cite{pliss} and \cite{Llibre}. We sumarize them in a theorem.

\begin{teo}\label{teorema Llibre}
Consider the Abel equation \eqref{abelian0} and assume that either $A(\theta)\not\equiv0$ or $B(\theta)\not\equiv0$ does not change sign.
Then it has at most three solutions satisfying $x(0)=x(2\pi),$ taking into account their multiplicities.
\end{teo}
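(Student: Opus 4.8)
The plan is to read this as the combination of the bounds of Pliss~\cite{pliss} and of Gasull--Llibre~\cite{Llibre}, and to reproduce their argument. First I would pass to the return map: for $\rho\in\mathbb{R}$ let $x(\theta;\rho)$ be the solution of \eqref{abelian0} with $x(0;\rho)=\rho$, put $\Pi(\rho)=x(2\pi;\rho)$, and let $d(\rho)=\Pi(\rho)-\rho$ be the displacement function, so that the solutions with $x(0)=x(2\pi)$ are exactly the zeros of $d$, counted with multiplicity. Two elementary remarks are used throughout: since the right-hand side of \eqref{abelian0} vanishes at $x=0$, the constant $x\equiv 0$ is a solution, hence any two solutions are either equal or disjoint for every $\theta$ and the solutions are linearly ordered; and the variational equation gives $\partial_\rho x(\theta;\rho)=\exp\int_0^\theta\bigl(3A(s)x^2+2B(s)x+C(s)\bigr)\,ds>0$, so $\Pi$ is increasing and $d'(\rho)=0$ exactly where $\Pi'(\rho)=1$.

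The crucial case is that in which $A(\theta)$ has constant sign and $A\not\equiv 0$. Assume \eqref{abelian0} has four distinct periodic solutions $\varphi_1<\varphi_2<\varphi_3<\varphi_4$ and consider the cross-ratio
\begin{equation*}
g(\theta)=\frac{(\varphi_3-\varphi_1)(\varphi_4-\varphi_2)}{(\varphi_3-\varphi_2)(\varphi_4-\varphi_1)}>0 .
\end{equation*}
From $\varphi_j'-\varphi_i'=A(\varphi_j^3-\varphi_i^3)+B(\varphi_j^2-\varphi_i^2)+C(\varphi_j-\varphi_i)$ one has $(\varphi_j'-\varphi_i')/(\varphi_j-\varphi_i)=A(\varphi_i^2+\varphi_i\varphi_j+\varphi_j^2)+B(\varphi_i+\varphi_j)+C$, and a short computation shows that in $g'/g$ the terms carrying $B$ and $C$ cancel, leaving
\begin{equation*}
\frac{g'(\theta)}{g(\theta)}=A(\theta)\,(\varphi_1-\varphi_2)(\varphi_3-\varphi_4) .
\end{equation*}
Since $(\varphi_1-\varphi_2)(\varphi_3-\varphi_4)>0$, the right-hand side has the constant, not identically zero, sign of $A$; hence $\int_0^{2\pi}(g'/g)\,d\theta\neq 0$, contradicting $g(2\pi)=g(0)$. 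Thus there are at most three distinct periodic solutions, and the case of a multiple zero of $d$ is brought back to this one by a Rolle-type argument on $d$ and $d'$ using the formula for $\partial_\rho x$ --- this is the technical core of \cite{Llibre}.

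For the case in which $B(\theta)$ has constant sign and $B\not\equiv 0$, I would split into two subcases. If moreover $A\equiv 0$, the equation is of Bernoulli type and the substitution $x=1/z$ turns it into the linear equation $z'=-Cz-B$; the constant sign of $B$ forbids a continuum of periodic solutions, so this equation has at most one, and with $x\equiv 0$ this gives at most two. If $A\not\equiv 0$, one uses once more that $x\equiv 0$ is a solution to perform a change of variables after which the sign of $B$ becomes the quantity governing the count, and an argument in the spirit of the cross-ratio computation above yields the bound of three periodic solutions (with multiplicity); the details are those of \cite{pliss,Llibre}.

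The main obstacles are, first, the careful bookkeeping of multiplicities through successive derivatives of the Poincar\'e map, and second, the treatment of the subcase $B\not\equiv 0$, $A\not\equiv 0$: a cross-ratio is invariant under M\"obius changes of the dependent variable and therefore, by itself, only detects the cubic coefficient $A$, so when only $B$ is known to be sign-definite a genuinely different manipulation --- exploiting that $x\equiv 0$ solves the equation --- is required.
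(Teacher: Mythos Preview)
The paper does not prove this theorem at all: it is stated there as a summary of results established in \cite{pliss} and \cite{Llibre} and then used as a black box in the proof of Theorem~\ref{teorema principal} (see also Remark~\ref{remark3}). So there is no ``paper's own proof'' to compare with; your proposal goes strictly further than the paper by outlining how the cited results are actually proved.

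On the content of your sketch: the cross-ratio computation for the case in which $A$ keeps constant sign is correct and is a clean route to the bound of three distinct periodic solutions --- this is essentially Lins~Neto's formulation, equivalent to the approach of Pliss. Your handling of multiplicities by a Rolle-type argument is the right idea but needs a sentence more: one typically either perturbs within the class (keeping $A$ sign-definite) so that a zero of $d$ of multiplicity $m$ splits into $m$ simple zeros, and then invokes the distinct-solutions bound, or one works directly with the successive derivatives of $\Pi$ as in \cite{Llibre}.

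The genuine gap is the subcase $B$ sign-definite with $A\not\equiv0$. You correctly note that the cross-ratio is M\"obius-invariant and therefore only sees the cubic coefficient, so a different reduction is required; but ``a change of variables after which the sign of $B$ becomes the quantity governing the count'' is not yet a proof. One concrete way to make this precise, in the spirit of \cite{Llibre} and of Remark~\ref{remark3} here, is to first remove the linear term via $x=\exp\bigl(\int_0^\theta C\bigr)\,u$, obtaining $u'=\widetilde A\,u^3+\widetilde B\,u^2$ with $\widetilde B=B\exp\bigl(\int_0^\theta C\bigr)$ of the same sign as $B$; then, since $u\equiv 0$ is a solution, nonzero periodic solutions keep a fixed sign and one may argue with $1/u$ (or with the variational derivatives of the return map), where $\widetilde B$ now plays the role that $A$ played before. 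Filling in this step would complete the argument.
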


\begin{nota}\label{remark3}
Condition (a) ot Theorem \ref{teorema principal} implies that one of the functions $A(\theta),B(\theta)$ of the Abel equation does not change sign.
If condition (i) is satisfied then $A(\theta)$ does not change sign and, as the third derivative of the Poincar\'e return map of the Abel equation is
this function $A(\theta)$ then, the Abel equation can have at most 3 limit cycles. In our case, one of them is the origin and another one is  infinity.
Consequently only one non-trivial limit cycle can exist, both for the scalar equation and for the planar system.

If condition (ii) is verified then it is the function $B(\theta)$ that does not change sign and, by a change of coordinates, one can get 
another scalar equation for which the  third derivative of the Poincar\'e map is the function $B(\theta),$ getting the same conclusions as before.
 \end{nota}

\section{Limit cycles}\label{Proof of Theorem $1.$}

\begin{figure}[ht]
\begin{center}
\includegraphics[width=12cm]{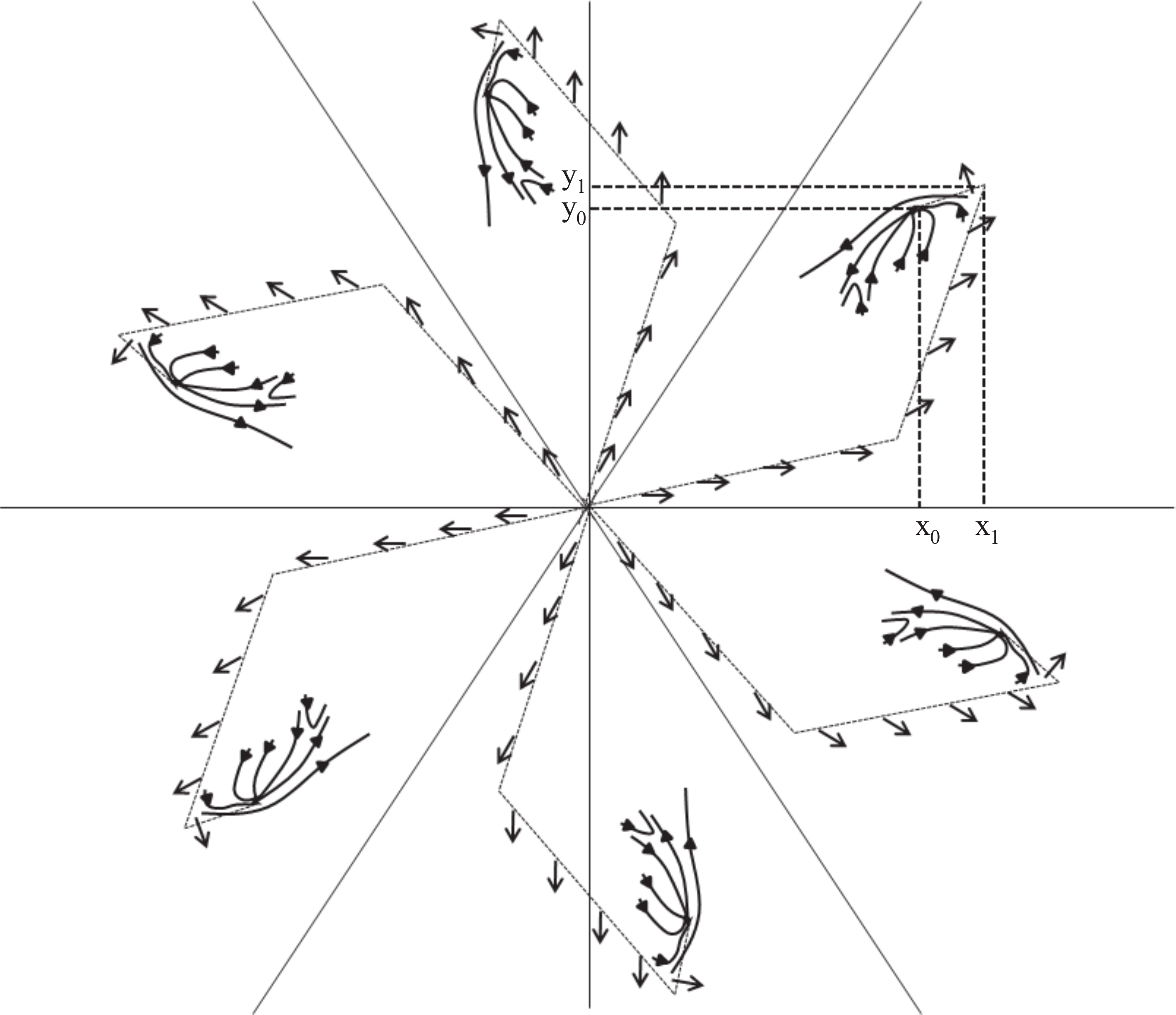}
\caption{The polygonal curve with no contact with the flow of the differential equation and the separatrices of the saddle-nodes of system \eqref{critical points0}.}\label{figure 1}
\end{center}
\end{figure}

\begin{proof}[Proof of Theorem~\ref{teorema principal}]
We first define the function $c(\theta)=s_2+\sin(6\theta)$ and the set $\Theta:=\{(r,\theta):\dot{\theta}=p_2+(s_2+\sin(6\theta))r=0\}.$
Since $|s_2|>1,$ we have $c(\theta)\neq0,~\forall\theta\in[0,2\pi].$

$(a)$ Let's assume first that condition $(i)$ is satisfied. By Lemma \ref{lema abelian}, we reduce the study of the periodic orbits of
equation \eqref{main equation} to the analysis of the  non contractible periodic orbits of the Abel equation \eqref{abelian0}.
Since $p_1\notin\left(\Sigma_A^-,\Sigma_A^+\right),$ by condition $(i)$ of Lemma \ref{signA}, we know that function $A(\theta)$
in the Abel equation does not change sign. Hence, from Theorem \ref{teorema Llibre}, the maximum number of solutions satisfying
$x(0)=x(2\pi)$ in system \eqref{abelian0} taking into account their multiplicities, is three. One of them is trivially $x=0.$
Since $c(\theta)\neq0,$
by simple calculations we can prove that the curve $x=1/c(\theta)$ is a second solution satisfying this condition. As shown in \cite{Rafel1},
undoing the Cherkas transformation we get that $x=1/c(\theta)$ is mapped into infinity of the differential equation. Then, by
Lemma \ref{lema abelian}, the maximum number of limit cycles of equation \eqref{main equation} is one. Moreover, from the same
lemma it follows that the limit cycle is hyperbolic.
 From the symmetry, it follows that a unique limit cycle must surround the origin.

$(b)$ 
We follow the same analysis method as in \cite{Rafel1}. 
When $p_1>\Sigma_A^+,$ by Lemma \ref{lema Lyapunov}, both the origin
and infinity in the Poincar\'e compactification are repelors. In particular the origin is an unstable focus.
On the other hand, from Lemma \ref{signA}, $A(\theta)$ does not vanish  when $p_1>\Sigma_A^+,$ and the origin is the unique critical point.
 It is easy to see that, since $p_2A(\theta)>0,$ then the exterior of the closed curve $\Theta$ is positively invariant and therefore,
by applying the Poincar\'e-Bendixson Theorem and part $(a)$ of this theorem, there is exactly one hyperbolic limit cycle surrounding
the curve $\Theta.$ Moreover, this  limit cycle  is  stable.

When $p_1=\Sigma_A^+,$ six more semi-elementary critical points appear and they are located on $\Theta.$ They are saddle-nodes as shown in Lemma \ref{7critical points}.
We will show that at this value of $p_1$ the periodic orbit still exists and it
surrounds the seven critical points. We will prove this by constructing a polygonal line with no contact with the flow of the differential equation.
On the polygonal, the vector field points outside, and 
consequently, as the infinity is a repelor, the $\omega$-limit set of the unstable separatrices of the saddle-nodes must be a limit cycle surrounding
$\Theta,$ see Figure \ref{figure 1} and Example~\ref{example} below.

The $\mathbb{Z}_6-$equivariance of system \eqref{main equation} allows us to study the flow in only one sextant of the phase space in cartesian
 coordinates, the behaviour in the rest of the phase space being identical.
 The polygonal line will join the origin to one of the saddle-nodes as in  Figure \ref{figure 1}.

We explain the construction of the polygonal line when  $0<s_1\leq 1$, $s_2>1$ and $p_2<0$.
In this case, $p_1=\Sigma_A^+$ implies that  $p_1>0$ and, in the notation of Lemma~\ref{lema solutions r theta},
we have $u=0$. Hence
$$
\frac{1}{\Delta_{\pm}}=\frac{(s_2^2-1)(1+s_1)}{s_1s_2-\sqrt{s_1^2+s_2^2-1}}-s_2<-1 
$$
and using the expression in  Lemma~\ref{lema solutions r theta}, we find that the angular coordinate $\theta_0$ of the saddle-node satisfies $-1<\tan 3\theta_0<0$, and therefore $\pi/4<\theta_0<\pi/3$.

The first segment in the polygonal is the line $\theta=\pi/4$. Using the expression \eqref{polar1}
we obtain that on this line $\dot\theta=p_2+r(s_2-1)$ which is negative for $r$ between $0$ and $r_1=-p2/(s_2-1)>0$. Thus, the vector field is transverse to this segment and points away from the saddle-node on it.

Another segment in the polygonal is obtained using the eigenvector $v$ corresponding to
the non-zero eigenvalue of the saddle-node $z_0$. The line $z_0+tv$ is the  tangent to the separatrix of the hyperbolic region of the saddle-node.
Let $t_0$ be the first positive value of $t$ for which the vector field fails to be transverse to this line.

If the lines  $z_0+tv$ and $\theta=\pi/4$ cross at a point with $0<t<t_0$ and with $0<r<r_1$, then the polygonal consists of the two segments.
If this is not the case, then  the segment joining  $z_0+t_0v$ to $r_1\left(\sqrt{2}/2,\sqrt{2}/2\right)$ will also be transverse to the vector field,
and the three segments will form the desired polygonal.

By using the same
arguments  presented above  and the fact that infinity is a repelor, it follows by the Poincar\'e-Bendixson Theorem  that
the only possible $\omega$-limit for the unstable separatrix of the saddle-node is a periodic orbit which has to surround the six saddle-nodes,
see again Figure \ref{figure 1}.

If $p_1=\Sigma_A^+$ then the function $A(\theta)$ does not change sign; therefore the arguments presented in  part $(a)$ of the proof of this theorem
assure the hyperbolicity of the limit cycle.

When we move $p_1$ towards zero but still very close to $\Sigma_A^+,$ then $B(\theta)$ is strictly positive because $\Sigma_A^+>\Sigma_B^+$ and there
are $13$ critical points as shown in Lemma \ref{7critical points}: the origin (which is a focus), six saddles and six critical points of index $+1$ on
$\Theta.$ Applying one more time part $(a)$ of the proof of this theorem, we know that the maximum number of limit cycles surrounding the origin is one.
If $p_1=\Sigma_A^+$ the limit cycle is hyperbolic and it still exists for the mentioned value of $p_1.$ Then we have the vector field with $B(\theta)$
not changing sign, $12$ non-zero critical points and a limit cycle which surrounds them together with the origin.
\end{proof}

\begin{ex}\label{example}

As an example of the construction of the polygonal line in the proof of Theorem~\ref{teorema principal} we present a particular case, done with Maple.
Let's fix parameters $p_2=-1,~s_1=-0.5,~s_2=1.2.$
With these values of the parameters, we have $\Sigma_A^-=-0.52423$, $\Sigma_A^+=3.25151.$

To construct the polygonal line we work in cartesian coordinates. A key
point in the process of identifying the three segments of the polygonal line is knowing explicitly the eigenvector corresponding to
the non-zero eigenvalue of the
saddle-node $(x_0,y_0)=(1.358,1.5).$ This eigenvector is $v=(-0.8594,-0.5114)$ so  
the slope of the tangent to the hyperbolic direction of the
saddle-node is $0.5114/0.8594$ and the straight line of this slope passing through the saddle-node is
$R\equiv\{y=1.5+\displaystyle{\frac{0.5114}{0.8594}}(x-1.358)\}.$ 
The tangent to the separatrix of the hyperbolic region of the saddle-node is locally transverse to this line.
The scalar product of the vector field associated to
equation \eqref{critical points0} with the normal vector to $R,$ $(0.5114,-0.8594),$ is given by the equation
\begin{equation*}
\begin{array}{l}
-2.39191647949065x^5+2.34410741916533x^4+4.86235167862649x^3-\\
2.71272659052423x^2-2.33924612305747x-0.92289951077311
\end{array}
\end{equation*}
when evaluated on the straight line $R;$ its unique real root is $x\simeq-1.1737$ and the flow is transversal from inside out through $R$ for any $x>-1.1737.$\\
Let us now define the polygonal line as
\begin{equation*}
(x(t),y(t))=
\left\{
\begin{array}{l}
(t,t)\hspace{7.1cm}\mathrm{if}~0\leqslant t<2\sqrt{\frac{1}{2.8}},\\
\\
(t,1.5(t-2\sqrt{\frac{1}{3}})+2\sqrt{\frac{1}{2.8}})\hspace{2.5cm}\mathrm{if}
~2\sqrt{\frac{1}{2.8}}\leqslant t<1.425,\\
\\
((t-2)\displaystyle{\frac{x_1-x_0}{x_1-2}}+x_0,(t-2)\displaystyle{\frac{y_1-y_0}{y_1-2}}+y_0)\hspace{0.5cm}\mathrm{if}
~1.425\leqslant t<2.
\end{array}
\right.
\end{equation*}
Let's now consider the point $(x_1,y_1)=(1.4250,1.5399),$ which stands at the intersection between the second and third segments of the polygonal line.
The scalar product between the normal to each segment and the flow of the differential equation is negative, when evaluated on the corresponding segments.
To show the calculation, we will exemplify it for the first segment, the remaining cases being treated similarly.

We elected the segment of the line $L\equiv\{y=x\}$ and when substituting it into the system in cartesian coordinates \eqref{cartesian} we obtain
\begin{equation*}
\left\{
\begin{array}{l}
\dot{x}=4x^5(s_1-s_2+1)+2x^3(p_1-p_2)\\
\dot{y}=4x^5(4s_1+5s_2-4)+2x^3(p_1+p_2).
\end{array}
\right.
\end{equation*}
A normal vector to the line $L$ is $(-1,1)$ and the scalar product of $(\dot{x},\dot{y})$ with $(-1,1)$ yields $f(x)=x^3(4p_2+x^2(9s_2-8)).$
Solving this last equation leads to $f(x)<0$ for $-2\sqrt{\frac{-p_2}{9s_2-8}}\leqslant x\leqslant 2\sqrt{\frac{-p_2}{9s_2-8}}.$
So we choose $0\leqslant x\leqslant 2\sqrt{\frac{-p_2}{9s_2-8}},$ and we get that this scalar product is negative in the region where the
polygonal line is defined as $(t,t).$

By using the same 
arguments  presented above  
 it follows  that the only possible $\omega$-limit for the unstable separatrix of the saddle-node is a periodic orbit which has to surround the six saddle-nodes,
see again Figure \ref{figure 1}.
\end{ex}

\paragraph{\bf Acknowledgements}
An important part of the work was carried out during a FCT-supported visit of A.C.
Murza to the Departament Ci\`{e}ncies Ma\-te\-m\`{a}\-ti\-ques i Inform\`{a}tica de la Universitat de les Illes Balears, under the supervision of Professors
Mar\'{\i}a J. \'Alvarez and Rafel Prohens, to whom A.C. Murza is deeply thankful for their kind guidance.

\end{document}